\newtheorem{theorem}{Theorem}[section]
\newtheorem{assumption}[theorem]{Assumption}
\newtheorem{lemma}[theorem]{Lemma}
\newtheorem{proposition}[theorem]{Proposition}
\newcommand{\E}{\mathbb{E}}
\renewcommand{\P}{\mathbb{P}}
\newcommand{\R}{\mathbb{R}}
\newcommand{\N}{\mathbb{N}}
\def\eps{\varepsilon}
\def\phi{\varphi}
\newcommand{\rmd}{\,\mathrm{d}}
\def\cal{\mathcal}
\begin{document}

\title{On a class of random walks in simplexes}

\author{Tuan-Minh Nguyen}
\address[\sc T.M. Nguyen]{School of Mathematical Sciences, Monash University, Victoria 3800, Australia}
\email{tuanminh.nguyen@monash.edu}

\author{Stanislav Volkov}

\address[\sc S. Volkov]{Centre for Mathematical Sciences, Lund University, Lund 22100-118, Sweden}
\email{stanislav.volkov@matstat.lu.se}

\begin{abstract}
We study the limit behaviour of a class of random walk models taking values in the standard $d$-dimensional ($d\ge 1$) simplex. From an interior point $z$, the process chooses one of the $d+1$ vertices of the simplex, with probabilities depending on $z$, and then the particle randomly jumps to a new location $z'$ on the segment connecting $z$ to the chosen vertex. In some special cases, using properties of the Beta distribution, we prove that the limiting distributions of the Markov chain are Dirichlet. We also consider a related history-dependent random walk model in $[0,1]$ based on an urn-type scheme. We show that this random walk converges in distribution to an arcsine random variable.
\end {abstract}

\keywords{Random walks in simplexes, iterated random functions, Dirichlet distribution, stick-breaking process.}

\subjclass[2010]{60J05, 60F05}
\maketitle

\section{Introduction}
Throughout this paper the $d$-dimensional standard orthogonal simplex (see e.g.~\cite{simplex}) is defined as 
$$
\mathcal{S}_d=\left\{(z_1,z_2,\dots,z_d)\in \mathbb{R}^d :\ z_1+z_2+\dots+z_d\le 1,\ z_j\ge 0,\ j=1,2,\dots,d\right\}.
$$ 
We also denote the interior of $\mathcal{S}_d$, the Borel $\sigma$-algebra, and the Lebesgue measure on $\mathcal{S}_d$ by $\mathcal{S}_d^o$, $\mathcal{B} (\mathcal{S}_d)$, and $\lambda_d$ respectively. Let $E_0=(0,0,\dots,0)$ be the origin, and $E_1=(1,0,\dots,0)$, $E_2=(0,1,0,\dots,0)$, $\dots$, $E_d=(0,\dots,0,1)$ be the standard orthonormal basis vectors in $\mathbb{R}^d$, which are also the vertices of $\mathcal{S}_d$. 

For some initial point $Z_0\in \mathcal{S}_d$, we consider the following random iteration:
$$
Z_{n+1}= (1-\xi_n) Z_n + \xi_n \Theta_n,\qquad n=0,1,2,\dots,
$$
where 
\begin{itemize}
\item $\xi_n$, $n=0,1,2,...$, are independent copies of some random variable $\xi$ with support in $[0,1]$; 

\item $\Theta_n$, $n=0,1,2,\dots$, are discrete random vectors such that
$$
\P\left(\Theta_n= E_j\ |\ Z_0,Z_1,\dots,Z_n; \xi_n  \right) = p_j(Z_n), \qquad j=0,1,2,...,d,
$$
where $p=(p_1,p_2,\dots,p_d)$ (sometimes referred to as \textit{probability choice function}) is a given (which is the same for all $n$) mapping from $\mathcal{S}_d$ to itself such that $p_j: \mathcal{S}_d\to [0,1], j=1,2,\dots d$ are Borel measurable functions, and 
$p_0(z): = 1-\sum_{j=1}^dp_j(z)$ for all $z\in\mathcal{S}_d$.
\end{itemize}

The aforementioned model originates from the Sethuraman's construction of the Dirichlet distribution (see~\cite{Sethuraman}) for the case where $p_1,p_2,...,p_d$ are positive constants. Sethuraman proved that if 
\begin{itemize}
\item $\xi\sim \text{Beta}(1,\gamma)$, where $\gamma$ is some positive constant,
\item $\Theta$ is a discrete random vector such that $\P(\Theta=E_j)=p_j$ for $j=1,2,\dots,d$, and $p_0=1-p_1-\dots-p_d>0$, 
\item $Z\sim\text{Dirichlet}(p_1\gamma,p_2\gamma,\dots,p_d\gamma,p_0\gamma)$, and 
\item $Z, \Theta, \xi$ are jointly independent,
\end{itemize}
then 
\begin{align*}
Z\overset{d}{=} (1-\xi)Z+\xi\Theta.
\end{align*}
Here $\text{Beta}(a,b)$ denotes the usual Beta distribution with the probability density function
$$
g(x)=\frac{\Gamma(a+b)}{\Gamma(a)\Gamma(b)}x^{a-1}(1-x)^{b-1},\qquad 0<x<1,
$$ 
$\Gamma$ is the Gamma-function, and $\text{Dirichlet}(\alpha_1,\alpha_2,...,\alpha_d,\alpha_{d+1})$ denotes the Dirichlet distribution with the probability density function 
$$
f(z_1,z_2,...,z_d)=\frac{\Gamma\left( \sum_{i=1}^{d+1}\alpha_i\right)}{\prod_{i=1}^{d+1} \Gamma(\alpha_i)}\left(1-\sum_{i=1}^d z_i\right)^{\alpha_{d+1}-1}\prod_{i=1}^{d}z^{\alpha_i-1}_i,\qquad (z_1,z_2,...,z_d)\in \mathcal{S}_d^o.
$$ 
Consequently, the stationary distribution of the Markov chain $\{Z_n\}_{n\ge0}$ corresponding to the Sethuraman's model is $\text{Dirichlet}(p_1\gamma,p_2\gamma,\dots, p_d\gamma, p_0\gamma)$.  Further extensions, where $\xi\sim \text{Beta}(k,\gamma)$ for some positive integer~$k$, and~$\Theta$ has a quasi-Bernoulli distribution, were studied by Hitczenco and Letac in~\cite{Hitczenko}. 

In~\cite{Diaconis}, Diaconis and Freedman reconsidered the Sethuraman's model from the point of view of random iterated functions and also studied the case where $p(z)$ depends on $z\in \mathcal{S}_1=[0,1]$. Other models in $\mathcal{S}_1$ with various special cases of $p(z)$ and $\xi$ were studied in~\cite{McKinlay,Pacheco,Ramli}. Inspired by the work of Diaconis and Freedman, Ladjimi and Peign\'e in their recent work~\cite{Ladjimi} studied iterated random functions with place-dependent probability choice functions, and demonstrated several applications to the one-dimensional model where $\xi\sim \text{Uniform}[0,1]$, and $p(z)$ is a H\"older-continuous function in $[0,1]$. 

In~\cite{McKinlay}, McKinlay and Borovkov gave a general condition for the ergodicity of the one-dimensional Markov chain $\{Z_n\}_{n\ge0}$ in $\mathcal{S}_1$. By solving integral equations, they derived a closed-form expression for the stationary density function in the case where $\xi\sim \text{Beta}(1,\gamma)$, and $p(z)$ is a piecewise continuous function on $[0,1]$. In particular, if $p(z)=(1-c)z+b(1-z), b,c\in (0,1]$, then the stationary distribution is $\text{Beta}(b\gamma, c\gamma)$. 

The model, also known in the literature as a stick-breaking process, a stochastic give-and-take (see~\cite{DeGroot}, \cite{McKinlay}) or a Diaconis-Freedman chain (see~\cite{Ladjimi}) has many applications in other fields such as human genetics, robot coverage algorithms, random search, etc. For further discussions, we refer the reader to~\cite{DeGroot}, \cite{Ramli} and \cite{McKinlay}.

The rest of the paper is organized as follows. In Section~2, we give an extension of the ergodicity criterion of MacKinlay and Borovkov to higher dimensional simplexes under certain assumptions on~$p(z)$ and~$\xi$. In Section~3, in the case where~$\xi$ is Beta-distributed while the probability choice function~$p(z)$ linearly depend on~$z$, we prove that the limiting distribution of the chain is a Dirichlet distribution. Finally, in Section~4, we consider a history-dependent random walk model in [0,1] based on urn-type schemes. Using martingales and coupling techniques, we show that the random walk converges in distribution to an arcsine random variable.

\section{Existence of the limiting distribution}
To prove the ergodicity of the Markov chain $\{Z_n\}_{n\ge 0}$, we will make use of the following result.

\begin{proposition}[Theorems~1.3 and~2.1 in~\cite{Borovkov}] \label{conderg}
Let $Z_n, n=0,1,2,...$ be a Markov chain on a measurable state space $(\mathcal{X},\mathfrak{B})$ such that for $n\ge 1$, $\P(Z_n\in A\ |\ Z_0=z)$ is a measurable function of $z\in \mathcal{X}$ when $A\in \mathfrak{B}$ is fixed, while it is a probability measure of $A$ when $z$ is fixed. 

Then $Z_n$ is ergodic, if there exists a subset $V\in \mathfrak{B}$, $q>0$, a probability measure $\phi$ on $(\mathcal{X},\mathfrak{B})$, and some positive integer $n_0$ such that
\begin{enumerate}
\item[(a)] $\P(\tau_V<\infty\ |\ Z_0=z)=1$ for all $z\in \mathcal{X}$, where $\tau_V=\inf\{n\ge 1: Z_n\in V\}$;
\item[(b)] $\sup_{z\in V} \E\left(\tau_V\ |\ Z_0=z\right)<\infty$;
\item[(c)] $\P(Z_{n_0}\in B\ |\ Z_0=z)\ge q \phi(B)$ for all $B\in \mathfrak{B}$ and $z \in V$; 
\item[(d)] $\gcd\left\{n: \P(Z_{n}\in B\ |\ Z_0=z)\ge q \phi(B)\right\}=1$ for $z \in V$.
\end{enumerate}
Moreover, if the above conditions are fulfilled, then there exists a unique invariant measure $\mu$ such that the distribution of $Z_n$ converges to $\mu$ in total variation norm.
\end{proposition}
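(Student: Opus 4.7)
The plan is to prove the proposition by the Nummelin–Athreya–Ney splitting technique applied to the $n_0$-skeleton chain, followed by a coupling argument to upgrade the conclusion to total-variation convergence for the original chain. Condition (c) is recognized as the statement that $V$ is a small set for the skeleton $Y_k := Z_{k n_0}$: the one-step kernel of $Y$ admits the minorization $P^{n_0}(z,\cdot)\ge q\phi(\cdot)$ on $V$. This allows me to represent the transition on $V$ as the mixture $P^{n_0}(z,\cdot)=q\phi(\cdot)+(1-q)R(z,\cdot)$, where $R(z,\cdot)$ is the (sub-)probability residual kernel.

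Next I would run two copies $(Z_n,Z_n')$ of the chain, starting from $z$ and from some arbitrary probability law, coupled as follows: while not both in $V$ at a common multiple of $n_0$, let them evolve independently; at each time $kn_0$ with both chains in $V$, flip a single coin with success probability $q$; on success, draw a common $\phi$-distributed state and declare the chains coalesced, running them identically from then on; on failure, draw each next state independently from $R(z,\cdot)$ and $R(z',\cdot)$ respectively. Conditions (a) and (b) ensure, via the strong Markov property applied to the successive returns to $V$, that both chains visit $V$ simultaneously at infinitely many skeleton times almost surely, and moreover that the expected time between joint visits is finite. Each joint visit yields an independent Bernoulli$(q)$ coalescence attempt, so the coalescence time $T$ is a.s.\ finite (indeed, geometrically tailed). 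The standard coupling inequality $\|\mathcal{L}(Z_n)-\mathcal{L}(Z_n')\|_{TV}\le \P(T>n)$ then forces $\mathcal{L}(Z_n)$ to be Cauchy in total variation, yielding existence of a limiting probability measure $\mu$ and convergence in total variation to it. Invariance of $\mu$ follows because the construction can be initialized with $Z_0'\sim\mu$ once it is identified, or equivalently $\mu$ is produced directly from the regeneration structure via the Kac/renewal formula, the cycle lengths having finite mean by (b).

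Condition (d) enters precisely when converting skeleton convergence into convergence of $\mathcal{L}(Z_n)$ itself. Without (d), coupling is only possible at times in the subgroup $n_0\Z$, and periodicity could leave $\mathcal{L}(Z_n)$ oscillating along cosets of the greatest common divisor of the set of times at which a minorization holds. The gcd-$=1$ hypothesis allows one to find finitely many indices $n_1,\dots,n_r$ with gcd one such that the minorization holds at each $n_i$; by an elementary number-theoretic argument every sufficiently large $n$ is a nonnegative integer combination of the $n_i$, so the minorization (with a possibly smaller constant $q'$) holds at every $n\ge N$, which makes the coupling attempts available at every time step past $N$ and upgrades the convergence from the skeleton to the full chain.

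The main obstacle I expect is the bookkeeping of the joint-return argument: one must verify carefully that \emph{both} copies return to $V$ at common skeleton times infinitely often with uniformly controlled expectations. This requires combining condition (a) (which only guarantees return of a single copy) with condition (b) (uniform finite expected return from within $V$) via the strong Markov property applied at alternating return times, and checking that the residual kernel $R$, being a genuine sub-probability transition whose normalization is $1-q$ off the coalescence event, does not spoil these return estimates. Everything else — existence and uniqueness of $\mu$, invariance, and the TV-rate — then follows from the coupling inequality and standard renewal arguments.
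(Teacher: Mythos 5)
First, note that the paper offers no proof of this proposition: it is imported verbatim as Theorem 2.1 of Borovkov's monograph, so there is no in-paper argument to compare yours against, and I can only judge the proposal on its own terms. Your overall strategy --- minorization on the small set $V$, Nummelin/Athreya--Ney splitting, regeneration, and a coupling or renewal argument --- is indeed the standard route to this classical result, and the roles you assign to (c) (minorization), to (a)--(b) (Harris-type recurrence of $V$ with uniformly bounded expected return time), and to (d) (aperiodicity) are broadly the right ones.

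There are, however, two genuine gaps at exactly the step you set aside as ``bookkeeping.'' First, the claim that conditions (a) and (b) alone ensure, via the strong Markov property, that two independent copies visit $V$ \emph{simultaneously} at infinitely many common times is false as stated: each copy returning to $V$ infinitely often does not imply common visit times (a period-two situation in which one copy occupies $V$ only at even times and the other only at odd times defeats the ``alternating return times'' device, and nothing in (a)--(b) excludes it). Simultaneous recurrence of the product chain to $V\times V$ is precisely where aperiodicity (d) must enter, together with a renewal-theoretic argument --- e.g.\ coupling the two regeneration renewal sequences, whose inter-arrival distributions have finite mean by (b) and are aperiodic by (d) --- whereas you have relegated (d) to a later, essentially cosmetic role. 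Second, restricting coalescence attempts to times in $n_0\Z$ at which both chains lie in $V$ is itself problematic: (a) and (b) control returns of the original chain to $V$ at arbitrary times, and the skeleton $Y_k=Z_{kn_0}$ need not visit $V$ at all, so your coupling may never be triggered. The standard fix is to attempt regeneration at time $\sigma+n_0$ whenever $\sigma$ is a return time of the original chain to $V$, rather than at skeleton times. With these two repairs --- regenerations anchored at actual return times to $V$, and simultaneity of the regenerations of the two copies obtained from the discrete renewal theorem using (d) --- your outline becomes the standard proof; as written, the central difficulty of the theorem is the point you have assumed away.
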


For each $z=(z_1,z_2\dots,z_d)\in \mathcal{S}_d$, we define $z_{0}=1-z_1-z_2-\dots-z_d$. Note that the set of all $(z_0,z_1,z_2\dots,z_d)$, where $(z_1,z_2\dots,z_d)\in {\cal S}_d$, constitutes the standard $d-$simplex in $\mathbb{R}^{d+1}$. 

\begin{assumption}\label{asum1}
There exist $\delta\in(0,\frac{1}{2^d})$ and $s,t\in(\delta^{1/d},1-\delta^{1/d})$, $s<t$ such that 
\begin{enumerate}
\item[(i)] $\P(\xi< 1-\delta):=1-\eta<1$;
\item[(ii)] there is an $\eps>0$ such that for any $1\le k\le d$ and any $0\le j_1<j_2<\dots<j_k\le d$ we have
$$
\inf_{\overset{z\in \mathcal{S}_d,}{ z_{j_1}+\dots+z_{j_k}\le \delta}}\ 
\sum_{l=1}^k p_{j_l}(z)\ge \eps;
$$ 
\item[(iii)] there is $c>0$ such that for all $B\in \mathcal{B}([0,1])$, $B\subset [s(1-t)^{d-1}-\delta,t]\cup[(1-t)^{d}-\delta,1-s]$ we have
$$
\P(\xi\in B)> c\lambda(B),
$$
where $\lambda$ is the Lebesgue measure on $[0,1].$ 
\end{enumerate}
\end{assumption}

\noindent
\textbf{Remark.} Condition (i) is quite natural in order to avoid the absorption of $Z_n$ at the boundary of $\mathcal{S}_d$. For $d=1$, the above conditions are very similar to the assumptions (E1-E2-E3) of McKinlay and Borovkov in~\cite{McKinlay}. However, in contrast to our condition (iii), McKinlay and Borovkov require that $\xi$ has a density on $[s-\delta,t]$ and $[1-t-\delta,1-s]$. Also, observe that in condition (iii) the intervals are properly defined (though they may overlap). \\

For $j=0,1,\dots,d$ define
$$
V_j=\left\{z=(z_1,\dots, z_d)\in\mathcal{S}_d:\ 1-\delta\le z_j\le 1 \right\}.
$$ 
In particular,
$$
V_0=\left\{z=(z_1,\dots, z_d)\in\mathcal{S}_d:\ \sum_{j=1}^d z_j\le \delta \right\}.
$$

\begin{figure}[h]
\setlength{\unitlength}{6cm}
\begin{center}
\begin{picture}
(1,1)(0,0)
\put(0,0){\line(0,1){1}}
\put(0,0){\line(1,0){1}}
\put(1,0){\line(-1,1){1}}
\put(0.8,0){\line(0,1){0.2}}
\put(0,0.8){\line(1,0){0.2}}
\put(0,0.2){\line(1,-1){0.2}}
\put(-0.15,0.03){$E_0$}
\put(1,0.03){$E_1$}
\put(-0.15,1){$E_2$}
\put(-0.02 , 0.03){ \line(1,0){0.17} }
\put(-0.02 , 0.06){ \line(1,0){0.14} }
\put(-0.02 , 0.09){ \line(1,0){0.11} }
\put(-0.02 , 0.12){ \line(1,0){0.08} }
\put(-0.02 , 0.15){ \line(1,0){0.05} }
\put(0.78 , 0.03){ \line(1,0){0.17} }
\put(0.78 , 0.06){ \line(1,0){0.14} }
\put(0.78 , 0.09){ \line(1,0){0.11} }
\put(0.78 , 0.12){ \line(1,0){0.08} }
\put(0.78 , 0.15){ \line(1,0){0.05} }
\put(-0.02 , 0.83){ \line(1,0){0.17} }
\put(-0.02 , 0.86){ \line(1,0){0.14} }
\put(-0.02 , 0.89){ \line(1,0){0.11} }
\put(-0.02 , 0.92){ \line(1,0){0.08} }
\put(-0.02 , 0.95){ \line(1,0){0.05} }
\put(0.12,0,1){$V_0$}
\put(0.67,0,1){$V_1$}
\put(0.08,0,7){$V_2$}
\end{picture}

\caption{Illustration of $V_j$, $j=0,1,2$ in case $d=2$.}\end{center}
\end{figure}
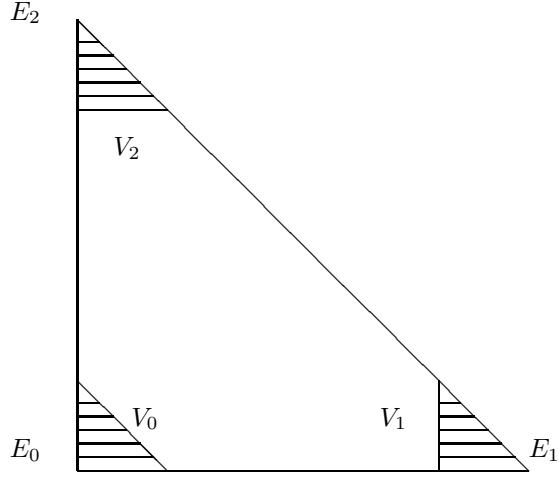

For each $x=(x_1,x_2,\dots,x_d)\in (0,1)^d$ also define $T:(0,1)^d\to\mathcal{S}_d^o$ by setting
$$
T(x)=\left(x_1\prod_{j=2}^d (1-x_j),\ x_2\prod_{j=3}^d (1-x_j),\ \dots,\ x_{d-1}(1-x_d),\ x_d\right).
$$
Note that $T$ is a homeomorphism from $(0,1)^d$ to $\mathcal{S}_d^o$, and its inverse $T^{-1}$ for each $z=(z_1,\dots,z_d)\in \mathcal{S}_d^o$ is given by 
$$
T^{-1}(z) = \left(\frac{z_1}{\displaystyle 1-\sum\limits_{2\leq j\leq d}z_j},\ \frac{z_2}{\displaystyle 1-\sum\limits_{3\leq j\leq d}z_j},\ \dots\ ,\ \frac{z_{d-1}}{1-z_d},\ z_d\right).
$$
Let $z=(z_1,\dots,z_d)$, $u=(u_1,\dots,u_d)\in\mathcal{S}_d$, $z_0=1-\sum_{k=1}^d z_k$, and $u_0=1-\sum_{k=1}^d u_k$.
For each $j=1,2,\dots,d$ we define the following functions 
\begin{align*}
R_j(u)&=(u_0,u_1,\dots,u_{j-1}, u_{j+1},u_{j+2},\dots,u_d),\qquad j=0,1,\dots,d;
\\
G_z(u)&=\left(u_0z_1+u_1,u_0z_2+u_2,\dots,u_0z_d+u_d\right).
\end{align*}
If $z_0\neq 0$ then the map $G_z$ is invertible; moreover, its inverse can be computed as 
$$
G_z^{-1}(u)=\left( u_1-\frac{z_1u_0}{z_0}, u_2-\frac{z_2u_0}{z_0},\dots,u_d-\frac{z_d u_0}{z_0} \right).
$$
For some two real numbers $s$ and $t$, such that $0<s<t<1$, define 
\begin{align*}
K&:=\left\{ (u_1,\dots,u_d)\in\mathcal{S}_d: s\le\frac{u_j}{1-\sum_{l=j+1}^d u_l}\le t, j=1,2,\dots,d \right\}
=T\left([s,t]^d\right).
\end{align*}

The proof of the following Lemma is given in the Appendix.
\begin{lemma}\label{transform}
Assume that $\delta\in\left(0,\frac{1}{2^d}\right)$, 
$s,t\in\left(\delta^{1/d},1-\delta^{1/d}\right)$ and $s<t$.
\begin{itemize}
\item[(a)] If $z\in V_0$, then
$$
G^{-1}_z(K) \subset T\left([s(1-t)^{d-1}-\delta,t]^d\right).
$$
\item[(b)] If $z\in V_k$ with $k\in \{1,2,\dots, d\}$, then
$$
G_{R_k(z)}^{-1}\circ R_k(K)\subset T\left([(1-t)^{d}-\delta,1-s]\times[s(1-t)^{d-1}-\delta,t]^{d-1}\right).
$$
\end{itemize}
\end{lemma}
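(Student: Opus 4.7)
The plan is to write everything in stick-breaking coordinates and reduce the inclusions to pointwise inequalities. For $u=T(x)$ with $x\in[s,t]^d$, set $P_j=\prod_{l>j}(1-x_l)$ (with $P_d=1$), so that $u_j=x_jP_j$ for $j\ge 1$ and $u_0=P_0$. A direct computation from the definition yields
\[
G_z^{-1}(u)_j=u_j-z_j\,\frac{u_0}{z_0}\ (j\ge 1),\qquad G_z^{-1}(u)_0=\frac{u_0}{z_0}.
\]
The only non-obvious algebraic ingredient needed is $(1-a)^d+a^d\le 1$ for $a\in[0,1]$, which follows from noting that $f(a):=1-a^d-(1-a)^d$ vanishes at $0,1$ and takes its unique interior extremum at $a=1/2$ with $f(1/2)=1-2^{1-d}\ge 0$. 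Applied with $a=\delta^{1/d}<s$ this gives $u_0\le(1-s)^d\le 1-\delta$.

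For part~(a), with $z\in V_0$ the bounds $z_0\ge 1-\delta$, $z_j\le\delta$, and $u_0\le 1-\delta$ give $z_ju_0/z_0\le z_j\le\delta$. Hence $v_j:=G_z^{-1}(u)_j\ge u_j-\delta\ge s(1-t)^{d-1}-\delta>0$, the strict positivity coming from $s(1-t)^{d-1}>\delta^{1/d}\cdot\delta^{(d-1)/d}=\delta$. Together with $v_0=u_0/z_0>0$ this places $v\in\mathcal{S}_d^o$, so $y:=T^{-1}(v)$ is well-defined. The lower bound $y_j\ge v_j\ge s(1-t)^{d-1}-\delta$ is immediate since $1-\sum_{l>j}v_l\in[0,1]$. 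For the upper bound, expanding
\[
y_j=\frac{x_jP_jz_0-z_jP_0}{P_jz_0+P_0\sum_{l>j}z_l}
\]
reduces the inequality $y_j\le t$ to $P_jz_0(x_j-t)\le P_0(z_j+t\sum_{l>j}z_l)$, which holds trivially because $x_j\le t$.

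For part~(b) the crucial observation is that $R_k(z)_0=z_k\ge 1-\delta$, so $\tilde z:=R_k(z)\in V_0$ and the scheme of~(a) applies with $\tilde u:=R_k(u)$ in place of $u$. The only structural change is that $\tilde u_0=u_k\le t\le 1-\delta$, so the bound $\tilde z_j\tilde u_0/\tilde z_0\le\delta$ now follows directly from $t\le 1-\delta$ rather than from the algebraic inequality. The first coordinate is special because $\tilde u_1=u_0\ge(1-t)^d$, yielding $w_1:=G_{\tilde z}^{-1}(\tilde u)_1\ge(1-t)^d-\delta$, while for $j\ge 2$ one has $\tilde u_j=u_m$ with $m\in\{1,\dots,d\}\setminus\{k\}$, so $\tilde u_j\ge s(1-t)^{d-1}$ and $w_j\ge s(1-t)^{d-1}-\delta$. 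For the upper bounds on $y=T^{-1}(w)$, the stick-breaking parameters $\tilde x:=T^{-1}(\tilde u)$ satisfy $\tilde x_j=x_j\le t$ when $k<j\le d$, $\tilde x_j=x_{j-1}P_{j-1}/(P_{j-1}+u_k)\le x_{j-1}\le t$ when $2\le j\le k$, and $\tilde x_1=u_0/(u_0+u_k)\le 1-s$, where the last is equivalent to $su_0\le(1-s)u_k$ and follows from $u_0/u_k\le(1-x_k)/x_k\le(1-s)/s$ via $x_k\ge s$. The same algebraic reduction as in~(a) then upgrades these bounds on $\tilde x$ to the required bounds on $y$.

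The main obstacle I anticipate is the bookkeeping for $\tilde x$ in part~(b): because $R_k$ permutes the stick-breaking indices, the formula for $\tilde x_j$ depends on whether $j\le k$ or $j>k$, and one must carefully track where the coordinate $u_0=\prod_l(1-x_l)$ ends up after the swap. Everything else reduces to elementary manipulations once the explicit stick-breaking formulas are in place.
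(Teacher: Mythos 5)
Your proof is correct and follows essentially the same route as the paper's: bound each coordinate of $G^{-1}$ from below by $u_j-\delta$ using $u_0/z_0\le 1$ and $z_j\le\delta$, then control $T^{-1}$ via the monotonicity of the ratios $v_j/(1-\sum_{l>j}v_l)$, with the same case split at $j=1$, $2\le j\le k$, $j>k$ in part (b). The only (cosmetic) difference is that you organize part (b) as an application of part (a)'s scheme to $R_k(z)\in V_0$ and track the stick-breaking coordinates of $R_k(u)$ explicitly, whereas the paper redoes the fraction estimates directly.
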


\begin{theorem}\label{t1}
Assume that all the conditions in Assumption~\ref{asum1} are fulfilled. Then the Markov chain $\{Z_n\}_{n\ge0}$ converges in distribution.
\end{theorem}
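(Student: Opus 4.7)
My plan is to verify each of the four conditions (a)--(d) of Proposition~\ref{conderg} with $V := \bigcup_{j=0}^d V_j$, $n_0 = d$, and $\phi$ equal to Lebesgue measure on $K$ normalised to a probability measure; the measurability hypotheses are automatic from the Borel assumption on the probability choice function. For conditions (a) and (b), I would observe that whenever $\Theta_n = E_j$ and $\xi_n \ge 1 - \delta$, the new state $Z_{n+1}$ automatically lies in $V_j$ (its $j$-th coordinate equals $(1-\xi_n)(Z_n)_j + \xi_n \ge 1 - \delta$ for $j \ge 1$, while for $j = 0$ the sum of its coordinates is $(1-\xi_n)\bigl(1 - (Z_n)_0\bigr) \le \delta$). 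Since $\xi_n$ and $\Theta_n$ are conditionally independent given $Z_n$, summing over $j$ gives $\P(Z_{n+1} \in V \mid Z_n = z) \ge \eta$ for every $z \in \mathcal{S}_d$ by Assumption~\ref{asum1}(i); consequently $\tau_V$ is stochastically dominated by a Geometric($\eta$) variable, so $\E[\tau_V \mid Z_0 = z] \le 1/\eta$ uniformly.

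The heart of the proof is condition (c). For $z \in V_0$, I would consider the event that during the $d$ steps after time $n$ the chain picks the directions $E_1, E_2, \ldots, E_d$ in that order. A short induction, corresponding to the stick-breaking identity with weights $u_l := \xi_{n+l-1}\prod_{j=l}^{d-1}(1-\xi_{n+j})$ for $1 \le l \le d$, shows that on this event
$$Z_{n+d} \;=\; G_z\bigl(T(\xi_n, \xi_{n+1}, \ldots, \xi_{n+d-1})\bigr).$$
The probability of this particular direction sequence is at least $\eps^d$: at step $l$ the $(l{+}1)$-th coordinate of the current state has received no contribution from a previous direction pick and is therefore bounded by $z_{l+1} \le \delta$, so Assumption~\ref{asum1}(ii) applied to the singleton $\{l+1\}$ gives $p_{l+1} \ge \eps$. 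For any Borel $B \subset K$, Lemma~\ref{transform}(a) places $T^{-1}(G_z^{-1}(B))$ inside $[s(1-t)^{d-1} - \delta, t]^d$, on which by Assumption~\ref{asum1}(iii) the joint density of $(\xi_n, \ldots, \xi_{n+d-1})$ is bounded below by $c^d$. Since $|\det DG_z| = z_0 \ge 1 - \delta$ and $|\det DT|$ is uniformly bounded above on the relevant compact cube, the change of variables yields $\P(Z_{n+d} \in B \mid Z_n = z) \ge q\,\lambda_d(B)$ for a constant $q > 0$ independent of $z \in V_0$.

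For $z \in V_k$ with $k \ge 1$ the same template applies with the direction sequence $E_0, E_1, \ldots, E_{k-1}, E_{k+1}, \ldots, E_d$; a bookkeeping calculation in barycentric coordinates yields the twisted identity $R_k(Z_{n+d}) = G_{R_k(z)}\bigl(T(\xi_n, \ldots, \xi_{n+d-1})\bigr)$, and Lemma~\ref{transform}(b) together with Assumption~\ref{asum1}(iii) (which lower-bounds the density of $\xi$ on both intervals featuring in part (b) of the lemma) completes condition (c). Condition (d) then follows because the same minorisation holds with $n_0 = d+1$: from any $z \in V$, a single step lands back in $V$ with probability $\ge \eta$, and then the $d$-step bound applies; hence the set appearing in (d) contains the coprime integers $d$ and $d+1$. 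The step I expect to be most delicate is the algebraic verification of the twisted identity for $V_k$ -- keeping track of which stick-breaking weight is attached to which barycentric slot under the involution $R_k$ -- together with the uniform control of Jacobian determinants needed so that the constant $q$ in condition (c) can be chosen independently of $z \in V$.
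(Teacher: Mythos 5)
Your proposal follows essentially the same route as the paper's proof: the same set $V=\bigcup_{j=0}^d V_j$ and geometric tail bound on $\tau_V$ for conditions (a)--(b), the same designated direction sequences together with the maps $T$, $G_z$, $R_k$, Lemma~\ref{transform} and Assumption~\ref{asum1}(iii) to minorise the $d$-step kernel by normalised Lebesgue measure on $K$, and the same $\gcd(d,d+1)=1$ device for aperiodicity. Your invocation of Assumption~\ref{asum1}(ii) for singletons (noting that the $(l{+}1)$-st coordinate is only ever shrunk before step $l$) is a clean way to get the $\eps^d$ factor, and the one cosmetic imprecision is that for the volume comparison you need the forward Jacobians $|\det DG_z|=z_0$ and $|\det DT|$ bounded \emph{above} (both are at most $1$), not below.
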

\begin{proof}${}$\\
\underline{\sf Step 1.} We define 
$$
V=\bigcup_{j=0}^dV_j.
$$
From part (i) of Assumption~\ref{asum1} it follows that $\P(Z_1\in V\ |\ Z_0=z)\ge \P(\xi\ge 1-\delta)=\eta>0$ for all $z\in \mathcal{S}_d$. Therefore, for all $z\in \mathcal{S}_d$, given $Z_0=z$, the random variable $\tau_{V}=\inf\{n\ge 1:\ Z_n\in V\}$ is stochastically dominated by a geometric random variable with parameter $\eta$, thus yielding
$$
\P(\tau_V>n\ |\ Z_0=z)\le (1-\eta)^n.
$$
Hence, the conditions (a) and (b) from the statement of Proposition~\ref{conderg} are satisfied.
\\[2mm]
\underline{\sf Step 2.}
Throughout the rest of the proof, we let $\text{Const}$ denote some positive constant. From the definition of $\{Z_n\}_{n\ge 0}$, we observe that
\begin{align*}
Z_d = \zeta_0 Z_0&+ \zeta_1 \Theta_0+\zeta_2\Theta_1+\dots + \zeta_d\Theta_{d-1},
\end{align*}
where 
\begin{align*}
(\zeta_{1},\dots,\zeta_{d})&:= T(\xi_0,...,\xi_{d-1}),\\ \zeta_0 &: = \displaystyle \prod_{j=0}^{d-1}(1-\xi_j)=1-\sum_{j=1}^d \zeta_j.
\end{align*}
For $1\le k\le d$ and $0\le j_1<j_2<\dots<j_k\le d$ define
$$
U_{j_1j_2...j_{k}}:=\left\{z=(z_1,z_2,...,z_d)\in \mathcal{S}_d :\ z_{j_1}+z_{j_2}+\dots+z_{j_k}\le \delta\right\}.
$$
Let $B\in \mathcal{B}(\mathcal{S}_d)$.
Then, if $Z_0=z\in V_0$ and $\Theta_{0}=E_1, \Theta_{1}=E_2,\dots, \Theta_{d-1}=E_d$, then $Z_{j}\in U_{j+1,j+2,\dots,d}$ for $j=0,1,\dots,d$. Therefore, from part (ii) of Assumption~\ref{asum1} it follows that
\begin{equation}\label{ineq}
\begin{array}{rl}
&\P\left(Z_d\in B\ |\ Z_0=z\right)
  \ge \P\left(Z_d\in B\cap K, (\Theta_{0},\Theta_{1},\dots,\Theta_{d-1})=(E_1,E_2,\dots,E_d)\ |\ Z_0=z\right) \\
& \ge 
\left[
\prod_{l=1}^d \displaystyle \inf_{z\in U_{l,l+1,...,d}}\left(\sum_{j=l}^dp_j(z) \right)
\right]
\times
 \P\left(
\zeta_0 z + \zeta_1 E_1+\dots+\zeta_{d} E_{d} \in B\cap K \right) \\
& \ge \eps^d~ \P((\zeta_0 z_1+\zeta_1, \zeta_0 z_2+\zeta_2,\dots,\zeta_0 z_d+\zeta_d)\in B\cap K)\\
& = \eps^d~ \P\left((\zeta_1,\dots,\zeta_d)\in G^{-1}_z(B\cap K)\right)
=\eps^d~ \P\left((\xi_0,\dots,\xi_{d-1})\in T^{-1}\circ G^{-1}_z(B\cap K)\right).
\end{array}
\end{equation}
\underline{\sf Step 3.} For $B\in \mathcal{B}(\mathcal{S}_d)$ and $z\in V_0$, from part~(iii) of Assumption~\ref{asum1} and Lemma~\ref{transform}, we have
\begin{equation}\label{ineq7}
\P\left((\xi_0,\xi_1,\dots,\xi_{d-1})\in T^{-1}\circ G^{-1}_z(B\cap K)\right) \ge c^d \lambda_d\left(T^{-1}\circ G^{-1}_z(B\cap K)\right).
\end{equation}
We shall demonstrate below that
\begin{equation}
\label{ineq8} \lambda_d\left(T^{-1}\circ G^{-1}_z(B\cap K)\right) \ge \lambda_d(B\cap K).
\end{equation}
Indeed, for any injective continuously differentiable map $Q:K\to [0,1]^d$ and any measurable subset $A\subset K$,
\begin{equation}\label{transdet}
\lambda_d(Q(A))\ge \inf_{u\in A}\left|\det\left(\frac{\partial}{\partial u} Q(u)\right)\right|\cdot \lambda_d(A).
\end{equation}
We also observe that
\begin{align*}
\det\left(\frac{\partial}{\partial u} G_z^{-1}(u)\right)&=\det\begin{pmatrix}
1+\frac{z_1}{z_0} & \frac{z_1}{z_0} & \frac{z_1}{z_0}& \dots &\frac{z_1}{z_0}\\
\frac{z_2}{z_0} & 1+\frac{z_2}{z_0} & \frac{z_2}{z_0} & \dots &\frac{z_2}{z_0}\\
\vdots & \vdots & \ddots & \dots & \vdots \\
\frac{z_{d-1}}{z_0} & \dots & \frac{z_{d-1}}{z_0} & 1+\frac{z_{d-1}}{z_0} &\frac{z_{d-1}}{z_0}\\
\frac{z_d}{z_0} & \dots & \frac{z_d}{z_0} & \frac{z_d}{z_0} &1+\frac{z_d}{z_0}
\end{pmatrix}\\
&=\det\begin{pmatrix}
1+\frac{z_1}{z_0}& -1 & -1 & -1 & \dots &-1\\
\frac{z_2}{z_0} & 1 & 0 &0 & \dots &0\\
\frac{z_3}{z_0} & 0 & 1 &0 & \dots &0\\
\vdots & \vdots & \vdots & \ddots & \dots & \vdots \\
\frac{z_{d-1}}{z_0} & 0& \dots & 0 & 1 & 0\\
\frac{z_d}{z_0} & 0 & \dots & 0 & 0 &1
\end{pmatrix}
\\ &
=1+\frac{z_1}{z_0}+\frac{z_2}{z_0}+\dots+\frac{z_d}{z_0}=\frac{1}{z_0}\ge 1,
\end{align*}
where the second matrix is obtained from the first one by subtracting the first column from each of the remaining columns, and then we use {\em the Schur determinant identity}, i.e.\ $\det {\begin{pmatrix}C&D\\E&F\end{pmatrix}}=\det(F)\cdot \det(C-DF^{-1}E)$ when $F$ is invertible; here $F$ is the $d-1$ square identity matrix, $C=[1+z_1/z_0]$, etc.
Furthermore,
\begin{align*}
\det\left(\frac{\partial}{\partial v} T^{-1}(v)\right)&
=\det
\displaystyle
\begin{pmatrix} 
\frac{1}{1-\sum_{j=2}^d v_j} & \frac{v_1}{\left(1-\sum_{j=2}^d v_j\right)^2} & \frac{v_1}{\left(1-\sum_{j=2}^d v_j\right)^2} & \dots & \frac{v_1}{\left(1-\sum_{j=2}^d v_j\right)^2} \\
 0 & \frac{1}{1-\sum_{j=3}^d v_j} & \frac{v_2}{\left(1-\sum_{j=3}^d v_j\right)^2} & \dots & \frac{v_2}{\left(1-\sum_{j=3}^d v_j\right)^2} \\
 \vdots & \vdots & \ddots & \dots & \vdots \\
 0 & \dots & 0 & \frac{1}{1-v_d} & \frac{v_{d-1}}{(1-v_d)^2}\\
 0& \dots & 0 & 0 & 1
\end{pmatrix}
\\
&=\left[ \prod_{j=1}^d \left( 1- \sum_{l=j+1}^d v_l\right)\right]^{-1}\ge 1. 
\end{align*}
Therefore, the inequality~\eqref{ineq8} is obtained by applying~\eqref{transdet} to the map $Q=T^{-1}\circ G^{-1}_z$. 

Combining~\eqref{ineq}, \eqref{ineq7} and~\eqref{ineq8}, we conclude that for each $B\in \mathcal{B}(\mathcal{S}_d)$ and $z\in V_0$
\begin{align}\label{ineqV0}
\P\left(Z_d\in B\ |\ Z_0=z\right)\ge \text{Const}~ \lambda_d \left(B\cap K\right).
\end{align}
\noindent
\underline{\sf Step 4.} For each $k\in \{1,2,...,d\}$, $B\in \mathcal{B}(\mathcal{S}_d)$ and $z\in V_k$, we have 
\begin{align*}
& \P(Z_d\in B\ |\ Z_0=z)
 \\
& \ge \P\left(Z_d\in B\cap (K), (\Theta_{0},\dots,\Theta_{d-1})= (E_0,E_1,\dots,E_{k-1},E_{k+1},\dots, E_{d}) \ |\ Z_0=z\right)\\
&\ge \text{Const}~\P\left(R_k^{-1}\circ G_{R_k(z)}(\zeta_1,\zeta_2,...,\zeta_d) \in B\cap K \right)\\
&=\text{Const}~\P\left( (\zeta_1,\dots,\zeta_{d})\in 
 G_{R_k(z)}^{-1}\circ R_k(B\cap K ) \right)\\
&=\text{Const}~\P\left( (\xi_0,\xi_1,\dots,\xi_{d-1})\in 
T^{-1} \circ G_{R_k(z)}^{-1}\circ R_k(B\cap K ) \right),
\end{align*} 
where we use the fact that for $u\in \mathcal{S}_d$ and $z\in V_k$, 
$$R_k^{-1}(G_{R_k(z)}(u))=\left({u_0}{z_1} + {u_2}, \ldots ,{u_0}{z_{k - 1}} + {u_k},{u_0}{z_k},{u_0}{z_{k + 1}} + {u_{k + 1}},...,{u_0}{z_d} + {u_d}\right).$$
Similarly to the inequalities~\eqref{ineq7} and~\eqref{ineq8}, we have
\begin{align*}
\P\left( (\xi_0,\xi_1,\dots,\xi_{d-1})\in 
T^{-1} \circ G_{R_k(z)}^{-1}\circ R_k(B\cap K) \right)\ge \text{Const} ~\lambda_d(B\cap K).
\end{align*}
It follows that for each $B\in \mathcal{B}(\mathcal{S}_d)$, $k=1,2,...,d$ and $z\in V_k$,
\begin{align}\label{ineqVk} \P\left(Z_d\in B|Z_0=z\right)\ge \text{Const}~ \lambda_d \left(B\cap K \right).
\end{align}
Next, we define the probability measure $\phi$ as
\begin{align*}
\phi(B)=\frac{\lambda_d \left(B\cap K\right)}{\lambda_d \left(K \right)}.
\end{align*}
for each $B\in \mathcal{B}(\mathcal{S}_d)$.
From~\eqref{ineqV0} and~\eqref{ineqVk}, we can conclude that the condition (c) in Proposition~\ref{conderg} is verified. 
\\[2mm]
\underline{\sf Step 5.} For each $B\in \mathcal{B}(\mathcal{S}_d)$ and $z\in V$, 
\begin{align*} 
\P\left(Z_{d+1}\in B\ |\ Z_0=z\right)& \ge \P\left(Z_{d+1}\in B, Z_1\in V |Z_0=z\right)
\\& 
= \P\left(Z_{d+1}\in B\ |\ Z_1\in V,Z_0=z\right)\cdot \P(Z_1\in V\ |\ Z_0=z) \\ & 
\ge \eta~\P\left(Z_{d+1}\in B\ |\ Z_1\in V \right) 
\ge \text{Const}~ \lambda_d \left(B\cap K\right).
\end{align*}
Since $\gcd\{d,d+1\}=1$, the condition (d) in Proposition~\ref{conderg} is also fulfilled. 
\end{proof}

\section{Beta walks with linearly place-dependent probabilities}\label{sec3}
Suppose that the conditions of Assumption~\ref{asum1} are fulfilled. Since convergence in total variation implies convergence in distribution, as $n\to\infty$, $Z_n$ converges in distribution to a random vector $Z$ having the invariant measure. By the definition of the invariant measure
\begin{align}\label{eq_Zd}
Z\overset{d}{=} (1-\xi) Z + \xi \Theta,
\end{align}
where $\Theta$ is a discrete random vector satisfying 
\begin{align}\label{eqTh}
\P(\Theta= E_j\ |\ Z=z) = p_j(z),\qquad j=0,1,...,d,
\end{align} 
and $\xi$ is independent of~$Z$ and~$\Theta$.

\begin{lemma}\label{L31}
Assume that 
\begin{itemize}
\item $p=(p_1,p_2,\dots, p_d)$ is a  Borel measurable probability choice function and $\Theta$ satisfies~\eqref{eqTh};
\item $\xi$ is independent of~$Z$ and~$\Theta$;
\item $Z$ and $\xi$ have the probability density functions $f$ and $g$ respectively (w.r.t.\ Lebesgue measures $\lambda_d$ and $\lambda_1$).
\end{itemize}
Then \eqref{eq_Zd} holds if and only if $f$ and $g$ satisfy the following equation
\begin{equation}\label{density}
f(z)=\sum_{j=0}^d T_j(z) \qquad \lambda_d\text{--a.e.\ on }\mathcal{S}_d,
\end{equation}
where
\begin{align*}
T_0(z_1,z_2,\dots,z_d)&=\int_{z_1+z_2+\dots+z_d}^1\frac{1}{u^d}\, f\left(\frac{z_1}{u},\frac{z_2}{u},\dots,\frac{z_d}{u}\right)p_0\left( \frac{z_1}{u},\frac{z_2}{u},\dots,\frac{z_d}{u}\right)g(1-u)\rmd u,
\\
T_j(z_1,z_2,\dots,z_d)&=\int_{1-z_j}^1\frac{1}{u^d} f\left(\frac{z_1}{u},\dots \frac{z_{j-1}}{u},\frac{z_j-1+u}{u},\frac{z_{j+1}}{u},\dots\frac{z_d}{u}\right)\\
& \qquad \qquad \qquad \times p_j\left(\frac{z_1}{u},\dots \frac{z_{j-1}}{u},\frac{z_j-1+u}{u},\frac{z_{j+1}}{u},\dots\frac{z_d}{u}\right)g(1-u)\rmd u 
\end{align*}
for $j=1,2,\dots,d$. (The integrals above are understood in the Lebesgue sense.)
\end{lemma}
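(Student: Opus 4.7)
The plan is to derive an explicit formula for the probability density $f_W$ of $W := (1-\xi)Z + \xi\Theta$ and then observe that, because $W$ inherits absolute continuity w.r.t.\ $\lambda_d$ from $Z$ and $\xi$, the identity $Z \overset{d}{=} W$ is equivalent to the a.e.\ equality $f = f_W$ on $\mathcal{S}_d$.

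First I would condition on $\Theta$. Since $\P(\Theta = E_j \mid Z=z) = p_j(z)$ and $\xi$ is independent of $(Z,\Theta)$, for every Borel set $A \subset \mathcal{S}_d$,
$$
\P(W \in A) = \sum_{j=0}^d \int_0^1 \!\!\int_{\mathcal{S}_d} \mathbf{1}_A\bigl((1-\xi)z + \xi E_j\bigr)\, p_j(z)\, f(z)\, g(\xi)\, dz\, d\xi.
$$
Fix $j$ and perform the change of variables $(z,\xi) \mapsto (w,u)$ with $u := 1-\xi$ and $w := (1-\xi)z + \xi E_j$. For $j = 0$ this map reads $w = uz$, so $z = w/u$; for $j \ge 1$, $w_i = u z_i$ when $i \neq j$ while $w_j = u z_j + (1-u)$, so the inverse map is precisely the tuple appearing inside $T_j$ in the statement. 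In either case the Jacobian determinant of $z \mapsto w$ (with $u$ held fixed) equals $u^d$, contributing a factor $u^{-d}$ to the integrand after substitution.

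Second I would determine the effective range of $u$. For $j = 0$ the constraint $z \in \mathcal{S}_d$ collapses to $u \ge w_1 + \cdots + w_d$ (from $\sum w_i/u \le 1$), which is exactly the lower limit in $T_0$. For $j \ge 1$, the nonnegativity $z_j = (w_j - 1 + u)/u \ge 0$ forces $u \ge 1 - w_j$, while the remaining simplex constraints on $z$ reduce to $w \in \mathcal{S}_d$ after a short algebraic check, and the upper limit $u \le 1$ is just $\xi \ge 0$. Substituting produces the lower limit $1 - w_j$ in $T_j$; summing over $j$ therefore identifies $f_W(w) = \sum_{j=0}^d T_j(w)$ for $\lambda_d$-a.e.\ $w \in \mathcal{S}_d$, and uniqueness of Radon--Nikodym derivatives closes both directions of the equivalence. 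The only point requiring care — rather than any real obstacle — is the bookkeeping of the domain reductions that produce the limits $\sum_i z_i$ and $1-z_j$, plus an appeal to Tonelli's theorem to justify swapping the iterated integrals, which is legitimate since each integrand is a nonnegative Borel function.
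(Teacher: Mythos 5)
Your proposal is correct and follows essentially the same route as the paper: condition on $\Theta=E_j$, substitute $u=1-\xi$ and $w=(1-\xi)z+\xi E_j$ with Jacobian factor $u^{-d}$, track the domain constraints to get the lower limits $\sum_i z_i$ and $1-z_j$, and conclude by uniqueness of densities (the paper phrases this via the joint CDF and Fubini rather than general Borel sets, but the computation is identical).
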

\begin{proof}
Denote $\tilde Z=(1-\xi) Z + \xi \Theta$. For each $z\in \mathcal{S}_d$, we have
\begin{align}\label{cdf1}\nonumber
\P\left(\tilde Z\le y\right)& =\int_0^1 \sum_{j=0}^d 
\P\left( u Z + ({1-u}) \Theta\le {y}, \Theta=E_j\right) g(1-u) \rmd u\\
& = \sum_{j=0}^d \int_{\mathcal{S}_d\times [0,1]}\mathbf{1}_{\left\{ z \le \frac{1}{u}(y-(1-u)E_j) \right\}}f(z)p_j(z)g(1-u) \rmd z\rmd u,
\end{align}
where for $y=(y_1,y_2,...,y_d),z=(z_1,z_2,...,z_d)\in \mathcal{S}_d$, we write $z\le y$ if $z_j\le y_j$ for all $j=1,2,...,d$.

For each $j\in \{0,1,\dots, d\}$, $u\in(0,1)$ and $y\in \mathcal{S}_d^o$, changing the variable $x=\phi(z):=uz+(1-u)E_j$ %, $\phi:\R^d\to\R^d$, 
we have
\begin{align}\label{cdf3}
\nonumber &\int_{\mathcal{S}_d}\mathbf{1}_{\left\{z \le\frac{1}{u}(y-(1-u)E_j) \right\}}f(z)p_j(z)\rmd z=\int_{\phi(\mathcal{S}_d)}\mathbf{1}_{\left\{ x \le y \right\}}f(\phi^{-1}(x))p_j(\phi^{-1}(x))
{\sf D} \phi^{-1}(x)
 \rmd x\\
%&=\int_{-\infty}^{z_1}\dots \int_{-\infty}^{z_d}\frac{1}{u^d}f\left( \frac{1}{u}(x-(1-u)E_j) \right)p_j\left( \frac{1}{u}(x-(1-u)E_j) \right)\mathbf{1}_{\left\{\frac{1}{u}(x-(1-u)E_j\in \mathcal{S}_d\right\}}\ \rmd x\\
%
%&=\int_{-\infty}^{z_1}\dots \int_{-\infty}^{z_d}\frac{1}{u^d}f\left( \frac{1}{u}(x-(1-u)E_j) \right)p_j\left( \frac{1}{u}(x-(1-u)E_j) \right)\mathbf{1}_{\left\{x_j\ge 1-u\right\}} \rmd x\\
&=\int_{\left\{x\in \mathcal{S}_d\ :\ 0 \le x\le y, 1-u\le x_j\le 1 
\right\}}\frac{1}{u^d}f\left( \frac{1}{u}(x-(1-u)E_j) \right)p_j\left( \frac{1}{u}(x-(1-u)E_j) \right) \rmd x,
\end{align}
where ${\sf D}\phi^{-1}$ denotes the Jacobian of $\phi^{-1}$.
Combining~\eqref{cdf1} and~\eqref{cdf3}, and applying Fubini's theorem, we obtain
\begin{align*}&\P\left(\tilde Z\le y\right)\\
&=
\int_{\{x\in \mathcal{S}_d\ :\ 0\le x\le  y \}}\left[\sum_{j=0}^d\int_{1-x_j}^1 \right.
 \left. \frac{1}{u^d}f\left( \frac{1}{u}(x-(1-u)E_j) \right)p_j\left( \frac{1}{u}(x-(1-u)E_j) \right) g(1-u) \rmd u\right] \rmd x.
\end{align*}
Therefore,
\begin{align*}
\tilde{f}(z): =\sum_{j=0}^d\int_{1-z_j}^1 \frac{1}{u^d} f\left( \frac{1}{u}(z-(1-u)E_j) \right)p\left( \frac{1}{u}(z-(1-u)E_j) \right) g(1-u) \rmd u = \sum_{j=0}^d T_j(z)
\end{align*}
is a probability density function of $\tilde Z$, which is unique up to a set of measure zero. Hence, $f(z)=\tilde f(z)$ for almost all $z\in \mathcal{S}_d$, and the lemma is thus proved. 
\end{proof}

\begin{theorem}\label{linearmodel}Assume that 
\begin{enumerate}
\item[(a)] $\xi\sim \text{\rm Beta}(1,\gamma)$, where $\gamma>0$ is some constant;
\item[(b)] $p=(p_1,p_2,\dots,p_d): \mathcal{S}_d\to \mathcal{S}_d$ is defined by
$$
p_k(z_1,z_2, \dots,z_d)= \displaystyle
\beta_k(1-z_k) +\left( 1-\sum_{j=1}^{d+1}\beta_j+\beta_k \right) z_k, \qquad k=1,2,...,d,
$$
where $\beta_k>0$ and $\sum_{j=1}^{d+1}\beta_j-\beta_k<1$ for $k=1,2,...,d+1$;
\item[(c)] $Z\sim {\rm Dirichlet}(\beta_1\gamma,\beta_2\gamma,\dots,\beta_d\gamma, \beta_{d+1}\gamma)$.
\end{enumerate}
Then $Z\overset{d}{=} (1-\xi) Z + \xi \Theta$, and thus $Z_n$ converges to a Dirichlet distribution by Theorem~\ref{t1} and Lemma~\ref{L31}.
\end{theorem}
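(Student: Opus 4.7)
The plan is to apply Lemma~\ref{L31} and verify the density equation~\eqref{density} directly for the Dirichlet density $f$ of hypothesis~(c) and the Beta$(1,\gamma)$ density, which satisfies $g(1-u)=\gamma u^{\gamma-1}$. Concretely, my goal is to prove
\begin{align*}
T_j(z)=z_j\, f(z) \qquad \text{for each } j=0,1,\dots,d, \quad z_0:=1-\sum_{i=1}^{d}z_i.
\end{align*}
Summing over $j$ and using $z_0+z_1+\dots+z_d=1$ will then give $\sum_{j=0}^d T_j(z)=f(z)$, which is exactly~\eqref{density}. The final clause of the theorem---convergence of $\{Z_n\}$ to the Dirichlet law---follows from the ergodicity theorem of the previous section, since Assumption~\ref{asum1} is easily verified for this model: Beta$(1,\gamma)$ has a continuous, strictly positive density on $(0,1)$, and hypothesis~(b) forces each $p_j$ to be bounded below by a positive constant on $\mathcal{S}_d$.

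A short preparation collapses $p$ into a symmetric form: writing $\sigma:=\sum_{j=1}^{d+1}\beta_j$, a direct expansion of the definition yields
\begin{align*}
p_k(z)=\beta_k+(1-\sigma)z_k\quad (k=1,\dots,d),\qquad p_0(z)=\beta_{d+1}+(1-\sigma)z_0,
\end{align*}
so the same algebra works for every index. Substituting $f$, $g$ and this form of $p_j$ into $T_j$ for $j\ge 1$, one observes that the $0$-th coordinate of the argument of $f$ equals $z_0/u$, the $j$-th coordinate equals $(u-1+z_j)/u$, and the remaining coordinates equal $z_i/u$. Collecting all powers of $u$ (the exponent consolidates to $\gamma(1-\sigma)$), the integrand becomes
\begin{align*}
\gamma\, C\, z_0^{\gamma\beta_{d+1}-1}\prod_{\substack{1\le i\le d\\ i\ne j}} z_i^{\gamma\beta_i-1}\,\Bigl[\beta_j u^{\gamma(1-\sigma)}(u-1+z_j)^{\gamma\beta_j-1}+(1-\sigma)u^{\gamma(1-\sigma)-1}(u-1+z_j)^{\gamma\beta_j}\Bigr],
\end{align*}
where $C$ is the Dirichlet normalising constant. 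The key observation is that the bracketed factor is exactly $\tfrac{1}{\gamma}\tfrac{d}{du}\bigl[u^{\gamma(1-\sigma)}(u-1+z_j)^{\gamma\beta_j}\bigr]$, so the integral telescopes: evaluating from $u=1-z_j$ (where the factor $(u-1+z_j)^{\gamma\beta_j}$ vanishes, using $\gamma\beta_j>0$) up to $u=1$ gives $z_j^{\gamma\beta_j}$. This raises the exponent of $z_j$ in $f$ from $\gamma\beta_j-1$ to $\gamma\beta_j$, yielding $T_j(z)=z_j f(z)$. The computation for $T_0$ is identical after rewriting $1-\sum_{i=1}^{d}(z_i/u)=(u-1+z_0)/u$, which makes the integrand have the same structure with $z_j,\beta_j$ replaced by $z_0,\beta_{d+1}$, and gives $T_0(z)=z_0 f(z)$.

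The main obstacle---really the entire content of the proof---is spotting the exact-derivative identity; without it, one faces a tangle of incomplete Beta integrals, whereas the telescoping reduces the whole verification to boundary evaluations. A small but essential technical point is the vanishing of the boundary term at $u=1-z_j$, which uses $\gamma\beta_j>0$ and therefore relies on the strict positivity of each $\beta_j$ in hypothesis~(b). Once~\eqref{density} has been checked, the identity $Z\overset{d}{=}(1-\xi)Z+\xi\Theta$ follows from Lemma~\ref{L31}, and convergence of $\{Z_n\}$ to the Dirichlet distribution from the ergodicity theorem together with uniqueness of the stationary law.
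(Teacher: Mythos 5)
Your proposal is correct and follows essentially the same route as the paper: both verify the fixed-point equation \eqref{density} by showing $T_j(z)=z_jf(z)$ for every $j$ and summing, and your exact-derivative observation $\beta_j u^{\gamma(1-\sigma)}(u-1+z_j)^{\gamma\beta_j-1}+(1-\sigma)u^{\gamma(1-\sigma)-1}(u-1+z_j)^{\gamma\beta_j}=\tfrac{1}{\gamma}\tfrac{d}{du}\bigl[u^{\gamma(1-\sigma)}(u-1+z_j)^{\gamma\beta_j}\bigr]$ is precisely the paper's quoted identity $\int_z^1 u^{-b-1}(u-z)^{a-1}[au-b(u-z)]\,du=(1-z)^a$ in antiderivative form. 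Your added remarks---that the boundary term vanishes because $\gamma\beta_j>0$, and that the convergence claim really rests on verifying Assumption~\ref{asum1} (each $p_j$ is bounded below by $\min(\beta_j,\,1-\sum_{i}\beta_i+\beta_j)>0$) rather than on Lemma~\ref{L31} alone---are correct and slightly more careful than the paper's own write-up.
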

\begin{proof} Let $f$ and $g$ be respectively the probability density functions of $\text{Dirichlet}(\beta_1\gamma,\dots,\beta_d\gamma, \beta_{d+1}\gamma)$ and $\text{Beta}(1,\gamma)$. It suffices to check that $f$ and $g$ satisfy the integral equation~\eqref{density}.

We have
\begin{align*}
T_0(z_1,\dots,z_d) &= \frac{\displaystyle\Gamma \left( \gamma \sum\limits_{j = 1}^{d + 1} \beta_j \right)}{\displaystyle\prod\limits_{j = 1}^{d + 1} \Gamma (\beta_j\gamma )}\prod\limits_{j = 1}^d z_j^{\beta _j\gamma - 1}\int_{\sum\limits_{j = 1}^d z_j}^1 \left( u - \sum\limits_{j = 1}^d z_j \right)^{\gamma\beta_{d + 1} - 1} u^{-\gamma \left( \sum\limits_{j = 1}^{d + 1} \beta_j - 1 \right)-1}\\
 & \times\left[\gamma\beta_{d+1} u -\gamma\left(\sum_{j=1}^{d+1}\beta_j-1\right)\left(u-\sum\limits_{j = 1}^d z_j\right)\right]\rmd u
\\
 &=\frac{\displaystyle\Gamma \left( \gamma \sum\limits_{j = 1}^{d + 1} \beta_j \right)}{\displaystyle\prod\limits_{j = 1}^{d + 1} \Gamma (\beta_j\gamma )}
 \left(1-\sum_{j=1}^d z_j \right)^{\beta_{d+1}\gamma}\prod_{j=1}^d z_j^{\beta_j\gamma-1},
\end{align*}
where we use the fact that 
$$
\int_z^1 u^{-b-1} (u-z)^{a-1} [a u-b (u-z)] \rmd u=(1-z)^a.
$$
Similarly, for $k=1,2,...,d$, we also obtain that
$$T_k(z_1,\dots,z_d)=\frac{\displaystyle\Gamma\left( \gamma\sum\limits_{j=1}^{d+1}\beta_j\right)}{\displaystyle\prod\limits_{j=1}^{d+1}\Gamma(\beta_j \gamma)}\left(1-\sum_{j=1}^d z_j \right)^{\beta_{d+1}\gamma-1}z_k\prod_{j=1}^d z_j^{\beta_j\gamma-1}.$$
Therefore,
$$\sum_{k=0}^dT_k(z_1,\dots,z_d)=\frac{\displaystyle\Gamma\left( \gamma\sum\limits_{j=1}^{d+1}\beta_j\right)}{\displaystyle\prod\limits_{j=1}^{d+1}\Gamma(\beta_j \gamma)}\left(1-\sum_{j=1}^d z_j \right)^{\beta_{d+1}\gamma-1}\prod_{j=1}^d z_j^{\beta_j\gamma-1}=f(z_1,z_2,...,z_d).$$
\end{proof}

We want to conclude this Section with the following observations.
\begin{itemize}
\item If $d=1$, then $p_1(z)=\beta_1(1-z)+(1-\beta_2)z$. This is, in fact, the one-dimensional case considered by McKinlay and Borovkov in~\cite{McKinlay}. 
\item
If $d\ge 1$ and $\sum_{j=1}^{d+1}\beta_j=1$, then we obtain the model considered by Sethuraman in~\cite{Sethuraman}.
\end{itemize} 

\section{Random walks in [0,1] based on urn-type schemes}
Let $h:[0,1]\to\R_+$ be some non-random measurable function. In this section, we will study a random walk on the unit interval $\mathcal{S}_1=[0,1]$ with the following properties: 
\begin{enumerate}
\item At time $n=0,1,2,\dots$, the system is characterized by $Z_n\in [0,1]$ (location of the particle) and two positive numbers $L_n$ and $R_n$. We assume that $R_0=L_0=1$. 

\item At time $n+1$, with probability $\frac{L_n}{L_n+R_n}$ the quantity $L_n$ increases by $h(Z_n)$, i.e.\ a function of the distance from $0$ to the current position of the particle, and then the particle jumps to a new location $Z_{n+1}$, which is uniformly distributed on the interval $(0,Z_n)$. With the complementary probability $\frac{R_n}{L_n+R_n}$, the quantity $R_n$ increases by $h(1-Z_n)$, i.e.\ a function of the distance from~$1$ to the current position of the particle, and then the particle jumps to a new location $Z_{n+1}$, uniformly distributed on the interval $(Z_n,1)$.
\end{enumerate}
One can think of $L_n$ and $R_n$ as numbers of two different kinds of balls in an urn, and the direction of the walk is governed by the kind of ball that is drawn randomly from the urn at time $n$. The number of balls of the chosen type then increases by yet another random quantity, depending on the position of the walk. The number of balls in our model can be, in general, non-integer; this is, however, allowed for the generalized P\'olya urn models.

Formally, we can write the model as the following recursion: let $Z_0\in(0,1)$ be some non-random quantity, and for $n=1,2,\dots$ let
\begin{equation}\label{recur}
\begin{array}{ll}
Z_{n}&= Z_{n-1} \xi_n\cdot \mathbf{1}_{\left\{U_{n}<\frac{L_{n-1}}{L_{n-1}+R_{n-1}}\right\}}
+
\left[Z_{n-1}+(1-Z_{n-1})\,\xi_n\right]\cdot \mathbf{1}_{\left\{U_n\ge \frac{L_{n-1}}{L_{n-1}+R_{n-1}}\right\}},
\\
L_n&= L_{n-1}\ \ \ +h(Z_{n-1})\cdot \mathbf{1}_{\left\{U_n<\frac{L_{n-1}}{L_{n-1}+R_{n-1}}\right\}},\\
R_n&= R_{n-1}+h\left(1-Z_{n-1}\right)\cdot \mathbf{1}_{\left\{U_n\ge \frac{L_{n-1}}{L_{n-1}+R_{n-1}}\right\}},
\end{array}
\end{equation}
where $\left\{\xi_n,U_n\right\}_{n=1}^\infty$ is a set of i.i.d.\ uniform $U[0,1]$ random variables. 
Since the probabilities of jumps to the left (and right resp.) depend on $(L_n, R_n)$, the distribution of $Z_{n+1}$ is generally dependent on the whole history of the random walk up to time $n$. Let also 
\begin{align*}
\mathcal{F}_n&=\sigma(U_1,...,U_n,\xi_1,\dots,\xi_n),
\\
\mathcal{G}_n&=\sigma(U_1,...,U_n,\xi_1,\dots\xi_{n-1}),
\end{align*}
and note that $Z_n$ is $\mathcal{F}_n$-measurable, while $L_n$ and $R_n$ are $\mathcal{G}_n$-measurable. 

If $h(x)\equiv 0$, then $L_n$ and $R_n$ do not change with time, and $Z_n$ is a Markov chain satisfying Theorem \ref{linearmodel} with $\gamma=1$ and $p\equiv p_1\equiv \frac{R_0}{L_0+R_0}$, thus $Z_n$ converges in distribution to Beta$\left(\frac{R_0}{L_0+R_0},\frac{L_0}{L_0+R_0}\right)$. If $h(x)\equiv \beta$ for some constant $\beta>0$, then the process $(L_n,R_n)$ is the classical P\'olya urn. We conjecture that under some regularity conditions on the function $h$, the random walk $Z_n$ converges either almost surely to a Bernoulli random variable, or weakly to some non-trivial distribution with full support on $[0,1]$ (compare with Section~2.1 in~\cite{Diaconis}). Even though we were not able to deal with the general case, there is one non-trivial situation, where we have explicit results, as follows. 

In the remaining part of this Section, we consider only the case where 
$$
h(x)=x,\qquad x\in [0,1].
$$
It turns out that even in this seemingly ``simple" case, there are challenges to rigorously obtain the limiting distribution (see Theorem~\ref{t3} below). 

\begin{lemma}\label{sumLR}
We have
\begin{itemize}
\item[(a)]
$\limsup_{n\to\infty} \frac{L_{n}}{R_{n}}\le 4$ and
$\limsup_{n\to\infty} \frac{R_{n}}{L_{n}}\le 4$ almost surely;
\item[(b)] $L_n\to\infty$ and $R_n\to\infty$ almost surely as $n\to\infty$. 
\end{itemize}
\end{lemma}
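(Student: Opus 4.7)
My plan is to establish (b) first and then deduce (a) from a drift analysis that uses $L_n,R_n\to\infty$. Throughout I set $S_n:=L_n+R_n$ and $p_n:=L_n/S_n$.

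\textbf{For (b).} Both $L_n$ and $R_n$ are non-decreasing, so it suffices to prove $\P(L_\infty<\infty)=0$, the statement for $R_n$ being symmetric under the swap $Z_n\leftrightarrow 1-Z_n$. I would split on whether $R_n$ is also bounded. If $R_\infty<\infty$ too, then $S_n$ stays bounded and $p_n$ is bounded away from $0$ and $1$ (since $L_n,R_n\ge1$). Since $\sum_n(S_{n+1}-S_n)<\infty$ and the conditional probabilities of left/right moves are bounded below, conditional Borel--Cantelli forces both $\sum_n Z_n$ and $\sum_n(1-Z_n)$ to converge, which is impossible as $Z_n+(1-Z_n)\equiv1$. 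If instead $R_n\to\infty$ while $L_\infty<\infty$, then $p_n\to 0$ and right-moves dominate; during a run of consecutive right-moves the recursion $1-Z_{n+1}=(1-\xi_n)(1-Z_n)$ drives $Z_n$ geometrically to $1$. However, $S_n\le n+1$ gives $\sum_n p_n\ge\sum_n 1/(n+1)=\infty$, so conditional Borel--Cantelli produces infinitely many left-moves; the task is to show that enough of these land while $Z_n$ is bounded away from $0$ to make $\sum_n Z_n\mathbf{1}_{\text{left at }n}=\infty$, contradicting $L_\infty<\infty$.

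\textbf{For (a).} With $L_n,R_n\to\infty$ available, consider $r_n:=L_n/R_n$. A direct expansion, using $(1-p_n)r_n=p_n$, yields
\[\E[r_{n+1}-r_n\mid\mathcal{F}_n]\;=\;\frac{r_n(2Z_n-1)}{S_n}+O\!\left(S_n^{-2}\right),\]
so the sign of the drift is that of $2Z_n-1$. A quasi-stationary analysis of $Z_n$ at slowly varying $p_n=p$ (the conditional mean satisfies $\E[Z_{n+1}\mid\mathcal{F}_n]=Z_n/2+(1-p_n)/2$, whose fixed point is $1-p$) suggests $\E[Z_n]\approx 1-p_n$, hence the drift of $r_n$ becomes negative once $p_n>1/2$, i.e.\ $r_n>1$. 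A supermartingale/Lyapunov comparison for $r_n\wedge M$ once $r_n$ exceeds a threshold, combined with the fact that single-step increments of $r_n$ are $O(1/R_n)$, then gives the explicit bound $\limsup r_n\le 12$; the bound on $R_n/L_n$ follows by the symmetric argument with $L$ and $R$ interchanged.

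\textbf{Main obstacle.} The genuinely delicate step is the sub-case $R_n\to\infty$ in (b). Although infinitely many left-moves are forced by $\sum p_n=\infty$, each contributes only $Z_n$ to $L_n$, and after a long right-streak $Z_n$ is geometrically close to $1$ while right-jumps $(1-Z_n)$ become tiny. Controlling the distribution of $Z_n$ at the times of left-moves---presumably via a renewal/regeneration argument on excursions between consecutive left-moves---is where the real work lies. For (a), converting the heuristic drift estimate into a rigorous supermartingale with the specific constant $12$ similarly requires quantitative control of the conditional law of $Z_n$ along stretches of same-type moves, rather than just the qualitative ``$\E[Z_n]\approx 1-p_n$'' picture.
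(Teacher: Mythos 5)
Your outline leaves the two hardest steps unproved, and you say so yourself. In part (b), the sub-case ``$R_n\to\infty$ while $L_\infty<\infty$'' is exactly the case your ordering (proving (b) before (a)) cannot avoid, and your sketch stops at ``the task is to show that enough of these land while $Z_n$ is bounded away from $0$''; nothing in the proposal controls the law of $Z_n$ at the left-move times (left moves can cluster, each one multiplying $Z$ by an independent uniform, so the increments $Z_n\mathbf{1}_{\{\text{left at }n\}}$ can be individually tiny). In part (a), the drift identity for $r_n=L_n/R_n$ is correct, but everything after it is heuristic: the step from ``the quasi-stationary mean of $Z_n$ is $1-p_n$'' to a genuine supermartingale requires quantitative control of the conditional law of $Z_n$ given $\mathcal{F}_n$ (which is not close to its quasi-stationary profile after, say, a long run of right moves), and the constant $12$ is never derived -- indeed your heuristic, if it worked, would give $\limsup r_n\le 1$. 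There is also a circularity risk: the rigorous Lyapunov argument of this type is what the paper carries out later (Lemmas 4.2--4.3 for $\zeta_n\to 1/2$), and it uses the present lemma as an input (e.g.\ to know $\eps_n=1/S_n\to0$ and that $\zeta_n$ stays in $[1/13,12/13]$).

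The paper avoids both difficulties by reversing the order and by a pathwise excursion decomposition that needs no information about the conditional law of $Z_n$. First it shows the path a.s.\ makes infinitely many moves in each direction (if it only moved left from some time on, $L_n$ would converge because its increments form a perpetuity $Z_N(1+\xi_1+\xi_1\xi_2+\cdots)<\infty$ a.s., while $R_n$ would be frozen, so the right-move probability stays bounded below and L\'evy's Borel--Cantelli gives a contradiction). Then, over each maximal run of left moves, $L$ increases by at most $2+\sum_k\prod_{i\le k}\xi_i$, an i.i.d.\ quantity with mean $3$, while $R$ does not move; and at the end of each such run, with probability $1/2$ the last uniform is below $1/2$, so $Z<1/2$ and the ensuing right move adds at least $1/2$ to $R$, giving i.i.d.\ lower bounds with mean $1/4$. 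The strong law then yields $\limsup L_n/R_n\le 3/(1/4)=12$ -- this is where the constant comes from. Part (b) is then immediate: if both $L_n$ and $R_n$ stayed bounded, (a) would make both one-step transition probabilities eventually at least $1/14$, so $S_n=L_n+R_n$ would gain at least $1/4$ with conditional probability at least $1/14$ at every step and hence diverge. Your case-1 martingale argument for (b) is fine as far as it goes, but to complete the lemma along your route you would still have to supply the missing renewal analysis in case 2 and a rigorous replacement for the drift heuristic in (a); the paper's excursion argument does both at once and much more cheaply.
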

\begin{proof}
First of all, observe that the probability that the sequence $Z_n$ eventually becomes monotone is zero, namely 
$$
\P(\exists N:\ Z_{n+1}\le Z_n\ \forall n\ge N)=0
\quad\text{and}\quad 
\P(\exists N:\ Z_{n+1}\ge Z_n\ \forall n\ge N)=0. 
$$
Since $R_n$ is non-decreasing in $n$ and $L_n\le L_0+n$, we have
$$
\P\left(Z_{n+1}\in(Z_n,1)\ |\ \mathcal{F}_n\right)
=\frac{R_n}{L_n+R_n}\ge 
\frac{R_0}{L_n+R_0}\ge 
\frac{R_{0}}{R_0 +L_0+n}.
$$
Since $\displaystyle\sum_{n=1}^{\infty} \frac{R_0}{R_0 +L_0+n}=\infty$,
by Levy's extension to the Borel-Cantelli lemma the event in the above display happens infinitely often with probability $1$, hence there are infinitely many $n$s for which $Z_n$ decreases. By the identical argument, $Z_n$ cannot become eventually increasing.
\\[5mm]
Let us prove part (a) now.
We know that $Z_n$ makes a.s.\ infinitely many steps to the left as well as to the right. Hence there exists a sequence of finite stopping times with respect to the filtration $\mathcal{G}$: 
\begin{align*}
\tau_1&=0,\\
\eta_i&=\inf\{n>\tau_i:\ Z_n\ge Z_{n-1}\},\\
\tau_{i+1}&=\inf\{n>\eta_{i}:\ Z_n< Z_{n-1}\}
\end{align*}
for $i=1,2,\dots$.
Moreover,
$$
\tau_1<\eta_1<\tau_2<\eta_2<\dots,
$$ 
$\tau_n,\eta_n\to\infty$ as $n\to\infty$, and
\begin{align*}
Z_{n}<Z_{n-1},&\ \text{ if } n\in[\tau_i,\eta_i-1],\\
Z_{n}\ge Z_{n-1},&\ \text{ if } n\in[\eta_i,\tau_{i+1}-1]
\end{align*}
for each $i=2,3,\dots$ (note that, in fact, the probability of the event $Z_n=Z_{n-1}$ is zero).

Observe that for each $k\ge 1$ and $1\le \ell\le k-1$,
\begin{align*}
& \left\{\eta_i=k,\ \tau_i=k-\ell\right\}
=
\left\{
Z_k\ge Z_{k-1},\ Z_{k-1}< Z_{k-2}<\dots<Z_{k-\ell}, \tau_i=k-\ell
\right\}
\\ &
=\left\{
U_k\ge \frac{ L_{k-1}}{L_{k-1}+R_{k-1}},\ 
U_{k-1}<\frac{ L_{k-2}}{L_{k-2}+R_{k-2}}, \dots,
U_{k-\ell+1}<\frac{ L_{k-\ell}}{L_{k-\ell}+R_{k-\ell}}
,\ \tau_i=k-\ell\right\}.
\end{align*}
However, $L_{k-j}$ and $R_{k-j}$ depend only on $L_{k-j-1}$, $R_{k-j-1}$, $Z_{k-j-1}$ and $U_{k-j}$, $j=1,2,\dots,\ell$ (see~\eqref{recur}). Hence the event above is $\sigma(U_1,\dots,U_k,\xi_1,\dots,\xi_{k-2})$--measurable, and it is thus independent of $\xi_{k-1}$; as a result $V_{i+1}:=\xi_{\eta_i-1}$ is independent of 
$$
\mathcal{H}_i:=\sigma\left(U_1,U_2,\dots,U_{\eta_i},\xi_1,\xi_2,\dots,\xi_{\eta_i-2}\right).
$$
On the other hand, since $\eta_{i-1}-1\le \eta_i-2$, $V_i$ is $\mathcal{H}_i$-measurable. So $\{V_i\}_{i=1}^\infty$ is an i.i.d.\ sequence of Uniform$[0,1]$ random variables.

We have
$$
R_{\eta_i}-R_{\eta_i-1}=1-Z_{\eta_i-1}=1-Z_{\eta_i-2} \xi_{\eta_i-1}\ge 1-\xi_{\eta_i-1}=1-V_{i+1},
$$
hence, due to the monotonicity of $R_n$,
$$
R_{\eta_i}\ge \sum_{k=1}^{i} [1-V_{k+1}].
$$
By the strong law of large numbers $\displaystyle\lim_{k\to\infty} \frac 1k \sum_{i=1}^k [1-V_{i+1}]=\frac 12$ a.s., hence 
$$
\liminf_{k\to\infty} \frac{R_{\eta_k}}{\eta_k}\ge \frac 12\qquad\text{a.s.}
$$

Next, for $i\in\N$, we have
\begin{align*}
L_{\tau_{i+1}-1}-L_{\eta_i-1}&=0,\\
L_{\eta_i-1}-L_{\tau_i-1}&=Z_{\tau_i-1}(1+\xi_{\tau_i}+\xi_{\tau_i}\xi_{\tau_i+1}+\dots+\xi_{\tau_i}\xi_{\tau_i+1}\cdots\xi_{\eta_i-2})
\\ &\le 1+\xi_{\tau_i}+\xi_{\tau_i}\xi_{\tau_i+1}+\dots+\xi_{\tau_i}\xi_{\tau_i+1}\cdots\xi_{\eta_i-2}\le \tilde V_i,
\end{align*}
where
\begin{align*}
\tilde V_i&=\left[1+\xi_{\tau_i}+\xi_{\tau_i}\xi_{\tau_i+1}+\dots+\xi_{\tau_i}\xi_{\tau_i+1}\cdots\xi_{\eta_i-2}\right]
\\ &
+\xi_{\tau_i}\xi_{\tau_i+1}\cdots\xi_{\eta_i-2}
\tilde\xi^{(i)}_{\eta_i-1}
+\xi_{\tau_i}\xi_{\tau_i+1}\cdots\xi_{\eta_i-2}
\tilde\xi^{(i)}_{\eta_i-1}\tilde\xi^{(i)}_{\eta_i}
+\xi_{\tau_i}\xi_{\tau_i+1}\cdots\xi_{\eta_i-2}
\tilde\xi^{(i)}_{\eta_i-1}\tilde\xi^{(i)}_{\eta_i}\tilde\xi^{(i)}_{\eta_i+1}+\dots
\\
&=1+
\sum_{k=\tau_i}^{\eta_i-2} \xi_{\tau_i}\xi_{\tau_i+1}\cdots\xi_{k}
+
\sum_{k=\eta_i-1}^{\infty} 
\xi_{\tau_i}\cdots\xi_{\eta_i-2}\cdot
\tilde\xi^{(i)}_{\eta_i-1}\tilde\xi^{(i)}_{\eta_i}
\cdots\tilde\xi^{(i)}_{k}
\end{align*}
and $\tilde\xi^{(i)}_k$, $i,k\in\N$ are i.i.d.\ copies of $\xi$, independent of everything else. By construction, $\tilde V_i$ is a $\tilde{\mathcal{H}_i}$-measurable random variable, where 
$$
\tilde{\mathcal{H}_i}:=\sigma\left(U_1,U_2,\dots,U_{\eta_i};\,\xi_1,\xi_2,\dots,\xi_{\eta_i-2};\, 
\tilde\xi^{(\ell)}_k, \ell=1,2,\dots, i,\ k\in\N \right).
$$ 
On the other hand, one can easily show that the variables $\xi_{\tau_{i+1}+j}$, $\tilde{\xi}^{(i+1)}_{j+1}$, $j=0,1,2,\dots$, are independent of $\tilde{\mathcal{H}_i}$, and therefore $\tilde V_{i+1}$ is independent of $\tilde{\mathcal{H}_i}$. Consequently, $\tilde V_i$, $i=1,2,\dots$, are independent random variables with expectation $\displaystyle\E\left[1+ \sum_{k=1}^{\infty} \left(\prod_{i=1}^k \xi_i\right)\right]=2$, hence by the strong law we have $\displaystyle\lim_{k\to\infty} \frac 1k \sum_{i=1}^k \tilde V_i=2$ a.s., yielding
\begin{align}\label{eqlimsup}
\limsup_{k\to\infty} \frac{L_{\eta_k}}{\eta_k}\le 2\qquad\text{a.s.}
\end{align}

Combining this with~\eqref{eqlimsup}, and taking into account that $\eta_i\to\infty$, we get
$$
\limsup_{n\to\infty} \frac{L_n}{R_n}\le \frac{2}{1/2}={4}
\qquad\text{a.s.}
$$
Due to the symmetry, the complementary inequality can be proved identically.
\\[5mm]
Let us now prove part (b).
From part (a) we obtain that a.s.\ either both $L_n$ and $R_n$ increase to $\infty$, or both stay bounded, i.e.\ $\sup_{n\ge0}L_n<\infty$ and $\sup_{n\ge0}R_n<\infty$ for all $n$. Let us show that the latter case a.s.\ cannot happen. Again, from (a) we get that a.s.\ there exists a (random) $N$ such that for $n\ge N$
$$
\P(Z_{n+1}<Z_n| {\cal F}_n)=\frac{L_n}{L_n+R_n}
=\frac{1}{1+R_n/L_n}>\frac1{5} , \quad
\P(Z_{n+1}>Z_n| {\cal F}_n)=\frac{R_n}{L_n+R_n}>\frac 1{5} .
$$
As a result, for $n\ge N$, we have for $S_n=L_n+R_n$
\begin{align*}
& \P\left(S_{n+1}-S_n>\frac 14\ |\ {\cal F}_n\right)\\
& =
\P\left(S_{n+1}-S_n>\frac 14\ |\ {\cal F}_n, Z_n\le \frac12\right)
+
\P\left(S_{n+1}-S_n>\frac 14\ |\ {\cal F}_n, Z_n> \frac12\right)
\\
&\geq
\P\left(R_{n+1}-R_n>\frac 14\ |\ {\cal F}_n, Z_n\le \frac12\right)
+
\P\left(L_{n+1}-L_n>\frac 14\ |\ {\cal F}_n, Z_n> \frac12\right)
\\
&=
\P\left(R_{n+1}-R_n>\frac 14,\ Z_{n+1}>Z_n\ |\ {\cal F}_n, Z_n\le \frac12\right)
\\ &+
\P\left(L_{n+1}-L_n>\frac 14,\ Z_{n+1}<Z_n\ |\ {\cal F}_n, Z_n> \frac12\right)
\\
&\geq \frac 12 \cdot\frac 15+\frac 12 \cdot\frac 15=\frac 15>0.
\end{align*}
Since $S_n$ is non-decreasing for any $n$, this implies that $S_n\to\infty$ a.s., contradicting the assumption that both $L_n$ and $R_n$ remain bounded.
\end{proof}
\begin{lemma}\label{limitzeta}
$\zeta_n:=\displaystyle \frac{L_{n}}{L_{n}+R_{n}}$ converges almost surely to $\zeta_{\infty}\in(0,1)$ as $n\to\infty$.
\end{lemma}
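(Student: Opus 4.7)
The natural approach is to analyze $\zeta_n$ via its Doob--Meyer decomposition. A direct computation from the one-step dynamics yields, with $S_n:=L_n+R_n$,
\[
D_n := \E[\zeta_{n+1}-\zeta_n\mid \mathcal{F}_n] = \zeta_n(1-\zeta_n)\cdot\frac{S_n(2Z_n-1)}{(S_n+Z_n)(S_n+1-Z_n)},
\]
together with the pointwise bound $|\zeta_{n+1}-\zeta_n|\le 1/S_n$, and therefore $\operatorname{Var}(\zeta_{n+1}\mid\mathcal{F}_n)\le 1/S_n^2$. Two inputs from Lemma~\ref{sumLR} are needed. First, by part~(a) we have $\zeta_n\in[1/13,12/13]$ for all sufficiently large $n$, so $\zeta_n$ stays in a compact subset of $(0,1)$. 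Second, inspecting the proof of part~(b), the estimate $\P(S_{n+1}-S_n>1/4\mid\mathcal{F}_n)\ge 1/14$ established there, combined with a standard SLLN for bounded martingale differences (or Azuma--Hoeffding plus Borel--Cantelli), gives $\liminf_{n\to\infty} S_n/n>0$ almost surely.

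Write the Doob decomposition $\zeta_n=\zeta_1+M_n+A_n$, where $M_n$ is an $\mathcal{F}_n$-martingale and $A_n=\sum_{k=1}^{n-1} D_k$ is predictable. Since $S_k\ge c_0 k$ eventually a.s., the variance bound yields $\sum_k \operatorname{Var}(\zeta_{k+1}\mid\mathcal{F}_k)<\infty$ a.s., and hence (via the finite quadratic variation criterion) $M_n$ converges almost surely.

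The main obstacle is the convergence of the predictable part $A_n$, because $|D_k|\le 1/(4S_k)=O(1/k)$ is not absolutely summable and one must exploit the oscillating sign of $2Z_k-1$. My plan here is to use the companion identity $\E[W_{n+1}\mid\mathcal{F}_n]=W_n/2+D_n$, valid for $W_n:=Z_n+\zeta_n-1$, which shows that $W_n$ is contractive around $0$, together with the decomposition $2Z_n-1=2W_n+(1-2\zeta_n)$. This splits $A_n$ into a contribution $\sum_k \zeta_k(1-\zeta_k)W_k/S_k$ that can be controlled by Abel summation against the contractive recursion $W_k=2W_{k+1}-2D_k-2(W_{k+1}-\E[W_{k+1}\mid\mathcal{F}_k])$, plus a residual drift $\sum_k\zeta_k(1-\zeta_k)(1-2\zeta_k)/S_k$. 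The latter is handled by a Robbins--Siegmund estimate for the Lyapunov function $V_n=(\zeta_n-1/2)^2$, using the explicit formula for $\E[V_{n+1}\mid\mathcal{F}_n]$ together with the step size $\gamma_n\asymp 1/S_n\asymp 1/n$; the averaged vector field $\zeta(1-\zeta)(1-2\zeta)$ has its unique attracting equilibrium at $\zeta=1/2$ on $[1/13,12/13]$, which is where a standard stochastic-approximation argument would steer the process. This last step is the most delicate, since the cross-term $(\zeta_n-1/2)D_n$ has random sign depending on $Z_n$ rather than on $\zeta_n$ alone, so supermartingality only emerges after averaging out $Z_n$ against its conditional contraction identity. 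Once $A_n$ is shown to converge, we obtain a.s.\ convergence of $\zeta_n$ to a limit, which lies in $(0,1)$ thanks to Lemma~\ref{sumLR}(a).
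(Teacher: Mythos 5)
Your preliminary computations check out: the drift formula $D_n=\zeta_n(1-\zeta_n)\,S_n(2Z_n-1)/\bigl((S_n+Z_n)(S_n+1-Z_n)\bigr)$ is correct, the identity $\E[W_{n+1}\mid\mathcal F_n]=W_n/2+D_n$ for $W_n=Z_n+\zeta_n-1$ holds, the bound $|\zeta_{n+1}-\zeta_n|\le 1/S_n$ together with $S_n\gtrsim n$ (which does follow from the estimate in the proof of Lemma~\ref{sumLR}(b) via conditional Borel--Cantelli) gives a.s.\ convergence of the martingale part, and the membership of the limit in $(0,1)$ from Lemma~\ref{sumLR}(a) is fine. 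This is a genuinely different route from the paper's: the paper never decomposes $\zeta_n$ itself, but instead exhibits the single process $\bigl(\tfrac12-\tfrac{L_n+Z_n}{L_n+R_n}\bigr)^2+\tfrac1{L_n+R_n}$ as a supermartingale (checked by an explicit polynomial expansion of its one-step drift), applies Doob's convergence theorem, and then deduces convergence of $\zeta_n$ from convergence of this quantity together with $|\zeta_{n+1}-\zeta_n|\to0$; identifying the limit as $1/2$ is deferred to the next lemma.

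The gap is that you never actually establish convergence of the predictable part $A_n=\sum_k D_k$, which is the entire difficulty, and the tool you name for the residual drift does not work in the form stated. After your (plausible, but unexecuted) Abel-summation step for $\sum_k\zeta_k(1-\zeta_k)W_k/S_k$, what remains is to show that $\sum_{k}a_k(1-2\zeta_k)$ converges, where $a_k=\zeta_k(1-\zeta_k)/S_k$; but this series converging is \emph{equivalent} to $\zeta_n$ converging once the other pieces are in place, so it cannot be ``handled'' as an independent input --- one must run the full stochastic-approximation argument on the coupled recursion. You propose Robbins--Siegmund for $V_n=(\zeta_n-1/2)^2$ and then immediately observe, correctly, that the cross-term $(\zeta_n-1/2)D_n$ has random sign depending on $Z_n$, so $V_n$ is not a supermartingale; you assert that ``supermartingality only emerges after averaging out $Z_n$'' without saying how. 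That averaging is precisely the paper's contribution: its Lyapunov function is your $V_n$ with the center shifted by the $Z_n$-dependent term $Z_n/(L_n+R_n)$ and compensated by $1/(L_n+R_n)$, and the verification that the resulting drift is nonpositive is a nontrivial explicit computation occupying most of the proof. Your plan would in fact close without any Lyapunov function at all: once the martingale part and $\sum_ka_kW_k$ are shown to converge, one gets $\zeta_{n+1}-\tfrac12=(1-2a_n)(\zeta_n-\tfrac12)+\delta_n$ with $\sum_n\delta_n$ convergent, $a_n\to0$, $\sum_na_n=\infty$, and a deterministic perturbation lemma yields $\zeta_n\to\tfrac12$. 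But neither that step nor the Abel-summation step is carried out in your write-up, so as it stands the proof is incomplete at its crux.
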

\noindent
\textbf{Remark.}
Lemma~\ref{sumLR} implies only that
$\displaystyle \frac 1{5}\le \liminf_{n\to\infty}\zeta_n\le \limsup_{n\to\infty} \zeta_n\le \frac{4}{5}$ a.s.
\vskip 5mm
\noindent
\begin{proof}[Proof of Lemma~\ref{limitzeta}]
We introduce the quantity 
$$
\displaystyle W_n=\left(\frac{1}{2}-\frac{L_n+Z_{n}}{L_{n}+R_n}\right)^2+\frac 1{L_n+R_n}
$$
which will be shown to be a supermartingale.
Indeed, 
\begin{align*}
& \E\left(W_{n+1}|\mathcal{F}_n\right)\\
&=\frac{L_n}{L_n+R_n}\E\left(W_{n+1}\left|U_{n+1}<\frac{L_{n}}{L_n+R_n},\mathcal{F}_n\right.\right)\\ 
&+\frac{R_n}{L_n+R_n}\E\left(W_{n+1}\left|U_{n+1}>\frac{L_n}{L_n+R_n},\mathcal{F}_n\right.\right)\\
& = \frac{L_n}{L_n+R_n}\int_{0}^{Z_n} \frac{1}{Z_n}\left[ \left(\frac{1}{2}-\frac{L_n+Z_n+u}{L_n+R_n+Z_n} \right)^2+\frac{1}{L_n+R_n+Z_n}\right]\rmd u\\
& + \frac{R_{n}}{L_n+R_{n}}\int_{Z_n}^{1} \frac{1}{1-Z_n}\left[\left(\frac{1}{2}-\frac{L_n+u}{L_n+R_n+1-Z_n} \right)^2+\frac{1}{L_n+R_n+1-Z_n}\right]\rmd u
\\
&= \frac{L_n}{L_n + R_n}\frac{3\left( {L_n - R_n} \right)^2 + 12\left( {L_n - R_n} \right)Z_n + 12\left( {L_n + R_n + Z_n} \right) + 13Z_n^2}{12\left( L_n + R_n + Z_n \right)^2}\\
&+ \frac{R_n}{L_n + R_n}\frac{3\left( {L_n - R_n} \right)^2 + 12\left( {L_n - R_n} \right)Z_n + 12\left( {L_n + R_n + Z_n} \right) + 13(1-Z_n)^2}{12\left( L_n + R_n + 1-Z_n \right)^2}.
\end{align*}
Substituting $\displaystyle\zeta_n=\frac{L_n}{L_n+R_n}$ and $ \displaystyle \eps_n=\frac{1}{L_n+R_n}$, or, equivalently, $\displaystyle L_n= \frac{\zeta_{n}}{\eps_n}, R_n=\frac{1-\zeta_n}{\eps_n}$, we obtain that
\begin{equation}\label{supermartingale}
\E\left(W_{n+1} -W_n |\mathcal{F}_n,Z_n=z\right)=
\frac{\eps_n\left[ r_0(\zeta_n,z)+ r_1(\zeta_n,z) \eps_n+\dots +r_5(\zeta_n,z) \eps_n^5\right]}{
6(\eps_n z+1)^2 (1+\eps_n(1-z))^2},
\end{equation}
where
\begin{align*}
r_0(\zeta,z)&=-24z \zeta^3+36z \zeta^2+12\,\zeta^3-18z\zeta-24\zeta^2+3z+15\zeta-3
\\
&=-3(2\zeta-1)^2(\zeta z+(1-z)(1-\zeta)),
\\ r_1(\zeta,z)&=-30\,{z}^{2}{\zeta}^{2}-12\,z{\zeta}^{3}+30\,{z}^{2}\zeta+24\,z{\zeta}
^{2}+6\,{\zeta}^{3}-7\,{z}^{2}-38\,z\zeta-12\,{\zeta}^{2}+14\,z+13\,
\zeta-7,
\\ r_2(\zeta,z)&=10\,{z}^{3}\zeta-12\,{z}^{2}{\zeta}^{2}-5\,{z}^{3}-9\,{z}^{2}\zeta+7\,
{z}^{2}-19\,z\zeta+4\,z+6\,\zeta-6,
\\ r_3(\zeta,z)&=-z \left( 6\,{z}^{3}{\zeta}^{2}-6\,{z}^{3}\zeta-12\,{z}^{2}{\zeta}^{2}
-17\,{z}^{3}-12\,{z}^{2}\zeta+6\,z{\zeta}^{2}+28\,{z}^{2}+30\,z\zeta-
23\,z-6\,\zeta+12 \right), 
\\ 
r_4(\zeta,z)&=-6\,{z}^{2} \left( 1-z \right) \left(z^2+2z\zeta(1-z)+1 \right), 
\\ r_5(\zeta,z)&=-6z^4 \left( 1- z \right)^2.
\end{align*}
One can show that $\max_{x,y\in [0,1]} r_i(x,y)\le 0$ for $i=0,2,3,4,5$. From the assertion of Lemma~\ref{sumLR}, $\eps_n\to 0$ almost surely as $n\to\infty$, and one can show that $\max_{x,y\in [0,1]} r_0(x,y)+r_1(x,y)\eps \le 0$ for $0\le \eps< 0.5$. Hence $W_n$ is a supermartingale. Therefore, by Doob's martingale convergence theorem, a.s.\ there exists $\displaystyle W_{\infty}:=\lim_{n\to\infty} W_n$. Observe that $$\displaystyle\zeta_n\in \left\{\frac12-\sqrt{W_n-\frac1{L_n+R_n}}-\frac{Z_n}{L_n+R_n},\frac12+\sqrt{W_n-\frac1{L_n+R_n}}-\frac{Z_n}{L_n+R_n}\right\}.$$ On the other hand, note that 
$$
\left|\zeta_{n+1}-\zeta_n\right|\le \max \left\{\frac{L_n}{L_n+R_n}.\frac{1-Z_n}{L_{n+1}+R_{n+1}},\frac{R_n}{L_n+R_n}.\frac{Z_n}{L_{n+1}+R_{n+1}} \right\}\to 0
$$
as $n\to\infty$. As a result, $\limsup_{n\to\infty} \zeta_n=\liminf_{n\to\infty} \zeta_n=:\zeta_{\infty}$ almost surely.
\end{proof}

\begin{lemma}\label{CVas} $\displaystyle\zeta_{\infty}=\frac{1}{2}$ almost surely. 
\end{lemma}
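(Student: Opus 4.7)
The plan is to reduce the claim to $W_\infty=0$ a.s.\ and then derive a contradiction from the assumption $\P(W_\infty>0)>0$. Since
$$W_n=\bigl(\zeta_n+Z_n\eps_n-\tfrac12\bigr)^2+\eps_n, \qquad \eps_n:=\tfrac1{L_n+R_n},$$
with $\eps_n\to 0$ a.s.\ by Lemma~\ref{sumLR}(b) and $\zeta_n\to\zeta_\infty$ a.s.\ by Lemma~\ref{limitzeta}, the already-established a.s.\ limit $W_\infty=\lim_n W_n$ equals $(\zeta_\infty-\tfrac12)^2$, so $\zeta_\infty=\tfrac12$ a.s.\ if and only if $W_\infty=0$ a.s.

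The first step is to apply the Doob decomposition to the non-negative supermartingale $W_n$: write $W_n=M_n+A_n$ with $A_n=\sum_{k<n}\E(W_{k+1}-W_k|\mathcal{F}_k)\le 0$ predictable and non-increasing. Non-negativity of $W_n$ forces $M_n=W_n-A_n\ge 0$; as a non-negative martingale, $M_n$ converges a.s.\ to a finite limit, which implies that $A_n$ also converges a.s.\ to a finite $A_\infty\in(-\infty,0]$. The remainder of the plan is to contradict $A_\infty>-\infty$ on $\{W_\infty>0\}$.

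The quantitative input comes from formula~\eqref{supermartingale} together with the closed form $r_0(\zeta,z)=-3(2\zeta-1)^2[\zeta z+(1-z)(1-\zeta)]$. The inner bracket is linear in $z$, equaling $1-\zeta$ at $z=0$ and $\zeta$ at $z=1$, so it is bounded below by $\min(\zeta,1-\zeta)$ on $[0,1]$. By Lemma~\ref{sumLR}(a), $\zeta_n\in[\tfrac1{14},\tfrac{13}{14}]$ for all large $n$ almost surely, and on $\{W_\infty>0\}$ additionally $(2\zeta_n-1)^2\to(2\zeta_\infty-1)^2>0$ pathwise. Together these give a random $c(\omega)>0$ with $r_0(\zeta_n,Z_n)\le -c(\omega)$ for all sufficiently large $n$. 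Since the denominator in~\eqref{supermartingale} lies in $[6,96]$ and the polynomials $r_1,\dots,r_5$ are uniformly bounded on $[0,1]^2$, the tail $r_1\eps_n+\dots+r_5\eps_n^5$ can be absorbed (using $\eps_n\to 0$) into $\tfrac12 c(\omega)$, yielding
$$ \E(W_{n+1}-W_n|\mathcal{F}_n)\le -c'(\omega)\,\eps_n \qquad \text{for all large } n \text{ on } \{W_\infty>0\}. $$

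The final step is the trivial observation $L_n+R_n\le n+2$ (since $\max(Z_n,1-Z_n)\le 1$), giving $\eps_n\ge 1/(n+2)$ and hence $\sum_n\eps_n=\infty$. Summing the previous inequality yields $A_\infty=-\infty$ on $\{W_\infty>0\}$, contradicting the second paragraph and completing the proof. The main subtlety I anticipate is the pathwise nature of $c(\omega)$ and of the ``eventually'' threshold; however, since the contradiction is required only $\omega$-by-$\omega$ on the positive-probability set $\{W_\infty>0\}$, the random constants cause no difficulty.
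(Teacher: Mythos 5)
Your proposal is correct, and it runs on the same engine as the paper's proof: the supermartingale $W_n$, the drift identity~\eqref{supermartingale}, the lower bound $\zeta z+(1-z)(1-\zeta)\ge\min(\zeta,1-\zeta)$ combined with $\zeta_n$ being eventually bounded away from $0$ and $1$ (Lemma~\ref{sumLR}(a)), and the divergence $\sum_n\eps_n=\infty$ coming from $L_n+R_n\le n+2$. Where you differ is in how the contradiction is extracted. The paper fixes a deterministic $\eps$ with $\P(|\zeta_\infty-\tfrac12|>\eps)>0$, introduces the stopping time $\tau_m=\inf\{n\ge m:|\zeta_n-\tfrac12|<\eps/2\}$, and compares expectations of the stopped supermartingale $Y_n=W_{n\wedge\tau_m}$: the left-hand side of~\eqref{sumY} is finite while the right-hand side diverges on the positive-probability event $\{\tau_m=\infty\}$. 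You instead pass to the Doob decomposition $W_n=M_n+A_n$ and argue $\omega$-by-$\omega$: the compensator $A_n$ must converge a.s.\ (since both $W_n$ and the non-negative martingale $M_n$ do), yet on $\{W_\infty>0\}$ the increments of $A_n$ are eventually $\le -c'(\omega)\eps_n$, forcing $A_\infty=-\infty$ there. The two are logically equivalent, but your pathwise version handles the random thresholds (the random time after which $r_0(\zeta_n,Z_n)\le -c(\omega)$ and after which the $O(\eps_n)$ remainder is absorbed) without any further care, whereas the expectation-based route needs the stopping time $\tau_m$ and a deterministic $\eps$ precisely to control those random quantities before integrating. Your reduction of the claim to $W_\infty=0$ a.s.\ via $W_\infty=(\zeta_\infty-\tfrac12)^2$ is also a clean way to phrase the target. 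One cosmetic remark: the interval $[\tfrac1{14},\tfrac{13}{14}]$ you quote is a weaker (hence still valid) form of the bounds $\tfrac1{13}\le\liminf\zeta_n\le\limsup\zeta_n\le\tfrac{12}{13}$ stated after Lemma~\ref{limitzeta}; either suffices.
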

\begin{proof}
Suppose $\P(\zeta_{\infty}=1/2)<1$. Then there exists $\eps>0$ such that $\P(|\zeta_{\infty}-1/2|>\eps)>0$. Let us denote the stopping time 
$$
\tau_{m}=\inf\left\{n\ge m \ : \ \left|\zeta_{n}-\frac12\right|<\frac{\eps}{2}\right\}.
$$
Since $\P(|\zeta_{\infty}-1/2|>\eps)>0$, there exists $m$ such that $\P(\tau_m=\infty)>0$. Let us consider $Y_{n}=W_{n\wedge \tau_m}$. $Y_n$ is also a supermartingale, hence, there exists $Y_{\infty}=\lim_{n\to\infty}Y_n$ as well. From~\eqref{supermartingale} it follows
\begin{equation*}
\E\left(W_{n+1} -W_n |\mathcal{F}_n\right)=\frac{1}{6}\eps_n \left(r_0(\zeta_n,Z_n)+ 
\eps_n \rho_n\right),
\end{equation*}
where 
\begin{align*}
&\rho_n =\frac{1}{(1+\eps_n Z_n)^2 (1+\eps_n(1-Z_n))^2}\left( {r}_1(\zeta_n,Z_n)+{r_2}(\zeta_n,Z_n)\eps_n+\dots +r_5(\zeta_n,Z_n) \eps_n^4
\right.
\\
& \left.- \left(1+(1-Z_n)Z_n\eps_n \right)\left(2+\eps_n+(1-Z_n)Z_n\eps_n^2\right)r_0(\zeta_n,Z_n) 
\right)
\end{align*}
Furthermore, $|\rho_n|$ is bounded by a non-random constant. This fact implies that for $N>0$
$$
\E\left( W_{m+N} \right)-\E\left( W_{m} \right)=\frac{1}{6}\E\left(\sum_{n=m}^{m+N} \eps_n\left( r_0(\zeta_n,Z_n)+O(\eps_n)\right)\right).
$$
Therefore,
\begin{equation}\label{sumY}
\E\left( Y_{\infty} \right)-\E\left( Y_{m} \right)=\frac{1}{6}\E\left(\sum_{n=m}^{\tau_m} \eps_n(r_0(\zeta_n,Z_n)+O(\eps_n))\right).
\end{equation}
Note that $\left|\zeta_{n}-\frac12\right|\ge \frac{\eps}{2}$, for all $n\in[m,\tau_m)$. Hence, combining with the remark that we made immediately after the statement of Lemma~\ref{limitzeta}, it follows that on the event $\{\tau_m=\infty\}$
$$
r_0(\zeta_n,Z_n)=-3\left(2 \zeta_n-1\right)^2 \left( Z_n \zeta_n+(1-Z_n)(1-\zeta_n)\right)
 \le- 3{\eps^2}\min\{\zeta_n,1-\zeta_n\}\le -\frac{3\eps^2}{13}
$$
for large enough $n$. Since $\P(\tau_m=\infty)>0$ and $\eps_n\ge \frac{1}{2+n}$, the LHS of~\eqref{sumY} is finite while the RHS is divergent. This contradiction proves the lemma. 
\end{proof}

\begin{theorem}\label{t3}
As $n\to\infty$, $Z_n$ converges in distribution to a $\text{\rm Beta}\left(\frac12,\frac12\right)$ random variable.
\end{theorem}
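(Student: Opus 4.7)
The plan is to approximate $Z_n$ by the Markov chain $\tilde Z$ on $[0,1]$ with constant left/right selection probabilities equal to $1/2$ and uniform jump $\xi\sim\mathrm{Unif}[0,1]$, exploiting Lemma~\ref{CVas}. This limit chain is exactly the $d=1$ instance of Theorem~\ref{linearmodel} with $\gamma=1$ and $\beta_1=\beta_2=\tfrac12$, so its unique invariant law is $\mu=\mathrm{Beta}(\tfrac12,\tfrac12)$, and it also satisfies Assumption~\ref{asum1}, so the ergodicity theorem of Section~2 applies to $\tilde Z$.

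First I would upgrade the ergodicity of $\tilde Z$ to \emph{uniform} geometric ergodicity. The argument used in Section~2 yields the Doeblin-type minorisation $\P(\tilde Z_{d+1}\in B\mid \tilde Z_0=z)\ge \mathrm{Const}\cdot\lambda(B\cap K)$ only for $z\in V$; composing with the one-step bound $\P(\tilde Z_1\in V\mid \tilde Z_0=z)\ge\eta$ from Assumption~\ref{asum1}(i) produces a single uniform minorisation $\P(\tilde Z_{d+2}\in B\mid \tilde Z_0=z)\ge \mathrm{Const}\cdot\lambda(B\cap K)$ valid for every $z\in[0,1]$. The standard Doeblin coupling then supplies constants $C>0$, $\rho\in(0,1)$ such that $\|\delta_z P^n-\mu\|_{\mathrm{TV}}\le C\rho^n$ uniformly in the initial point $z$, where $P$ denotes the transition kernel of $\tilde Z$.

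Next, for each $m\ge 1$ I would build on one probability space a pair $(Z_n,\tilde Z_n^{(m)})_{n\ge m}$ where $\tilde Z^{(m)}$ runs the limit chain from time $m$ with $\tilde Z_m^{(m)}=Z_m$, driven by the \emph{same} $U_k$ and $\xi_k$ as the original chain for $k\ge m$. Given $Z_k=\tilde Z_k^{(m)}$, the two chains pick the same side unless $U_k$ falls in the interval with endpoints $\zeta_k$ and $\tfrac12$ (an event of conditional probability $|\zeta_k-\tfrac12|$); if the sides agree then the shared $\xi_k$ forces $Z_{k+1}=\tilde Z_{k+1}^{(m)}$. A union bound over $k\in[m,n)$ therefore gives
\[
\P\bigl(Z_n\neq \tilde Z_n^{(m)}\bigr)\ \le\ \sum_{k=m}^{n-1}\E\bigl|\zeta_k-\tfrac12\bigr|,
\qquad
\bigl\|\mathcal{L}(Z_n)-\mathcal{L}(Z_m)P^{n-m}\bigr\|_{\mathrm{TV}}\ \le\ \sum_{k=m}^{n-1}\E\bigl|\zeta_k-\tfrac12\bigr|.
\]

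To finish, fix $\varepsilon>0$, choose $n_0$ with $C\rho^{n_0}<\varepsilon/2$ so that $\|\mathcal{L}(Z_m)P^{n_0}-\mu\|_{\mathrm{TV}}<\varepsilon/2$ regardless of $\mathcal{L}(Z_m)$, and set $m=n-n_0$. The triangle inequality yields
\[
\bigl\|\mathcal{L}(Z_n)-\mu\bigr\|_{\mathrm{TV}}\ \le\ \sum_{k=n-n_0}^{n-1}\E\bigl|\zeta_k-\tfrac12\bigr|\ +\ \tfrac{\varepsilon}{2}.
\]
By Lemma~\ref{CVas} and bounded convergence $\E|\zeta_k-\tfrac12|\to 0$, and the sum has only $n_0$ terms, so $\limsup_n\|\mathcal{L}(Z_n)-\mu\|_{\mathrm{TV}}\le \varepsilon/2$; letting $\varepsilon\downarrow 0$ gives convergence in total variation (hence in distribution) to $\mathrm{Beta}(\tfrac12,\tfrac12)$. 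The main obstacle is the first step: the coupling only controls $\|\mathcal{L}(Z_n)-\mathcal{L}(Z_m)P^{n-m}\|_{\mathrm{TV}}$, and the residual $\|\mathcal{L}(Z_m)P^{n-m}-\mu\|_{\mathrm{TV}}$ must be small regardless of the unknown distribution $\mathcal{L}(Z_m)$, which forces us to work with a minorisation that is independent of the starting point rather than the pointwise ergodicity delivered directly by Proposition~\ref{conderg}.
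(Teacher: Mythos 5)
Your argument is correct, but it follows a genuinely different route from the paper's. The paper also couples $Z_n$ with constant-selection-probability walks, but via a \emph{monotone sandwich}: on the event that $\zeta_n$ has entered $[\frac12-\eps,\frac12+\eps]$ by a fixed time $N_0$, the chain is squeezed between two auxiliary walks with selection probabilities $\frac12-\eps$ and $\frac12+\eps$ driven by the same $(U_n,\xi_n)$, whose laws converge weakly to $\mathrm{Beta}(\frac12\pm\eps,\frac12\mp\eps)$ by Theorem~\ref{linearmodel}; letting $\eps\downarrow 0$ and $N_0\to\infty$ sandwiches the distribution function. That argument needs only the qualitative ergodicity of the auxiliary chains from Proposition~\ref{conderg} together with the monotonicity of the update maps $z\mapsto\xi z$ and $z\mapsto z+\xi(1-z)$, and it delivers weak convergence. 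Your identity coupling with the single chain at probability exactly $\frac12$ replaces monotonicity by two quantitative inputs: the bound $\P(Z_n\neq\tilde Z_n^{(m)})\le\sum_{k=m}^{n-1}\E\bigl|\zeta_k-\tfrac12\bigr|$, which is valid because $U_k$ is independent of $\mathcal{F}_k$ and, given agreement at time $k$, the chains can first disagree only when $U_k$ falls between $\zeta_k$ and $\tfrac12$; and a Doeblin minorisation uniform in the starting point, which you correctly extract from the Section~2 proof by composing the entry bound $\P(Z_1\in V\mid Z_0=z)\ge\eta$ with the minorisation valid on $V$ (and the $p\equiv\frac12$, $\xi\sim\mathrm{Unif}[0,1]$ chain does satisfy Assumption~\ref{asum1}, so this is available). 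The fixed-window device $m=n-n_0$ combined with $\E|\zeta_k-\tfrac12|\to0$ (Lemma~\ref{CVas} plus bounded convergence) then closes the argument. In exchange for the extra uniform-ergodicity input you obtain convergence in total variation, strictly stronger than the weak convergence asserted in the theorem, and you work with a single auxiliary chain rather than an $\eps$-indexed family; both proofs ultimately rest on Lemma~\ref{CVas} and on Theorem~\ref{linearmodel} to identify the limit as $\mathrm{Beta}(\frac12,\frac12)$.
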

\begin{proof}
Let us fix a small $\eps>0$. By Lemma~\ref{CVas} there exists a (random) $N$ such that 
$$
\displaystyle\frac{1}{2}-\eps\le\frac{L_{n}}{L_n+R_n}\le \frac{1}{2}+\eps
$$ 
for all $n\ge N$. Fix a large non-random $N_0$. For this fixed $N_0$ we couple $\{{Z_n}\}_{n\ge 0}$ with two random walks $\{\tilde{Z_n}\}_{n\ge 0}$ and $\{\hat Z_n\}_{n\ge 0}$ defined as follows: 
\begin{itemize}
\item For $0\le n \le N_0$, set $\tilde{Z_n} = \hat Z_n =Z_{n}.$
\item For $n\ge N_0$, set
$$
\tilde Z_{n+1}=\begin{cases}
\xi_{n+1} \tilde Z_n & \text{if} \ U_{n+1}\le \frac{1}{2}-\eps;\\
\tilde Z_n+\xi_{n+1} (1-\tilde Z_n) & \text{if} \ U_{n+1}>\frac{1}{2}-\eps,
\end{cases}
$$
and
$$
\hat Z_{n+1}=\left\lbrace\begin{matrix}
 \xi_{n+1} \hat Z_n & \text{if} \ U_{n+1}\le \frac{1}{2}+\eps;\\
\hat Z_n+\xi_{n+1} (1-\hat Z_n) & \text{if} \ U_{n+1}> \frac{1}{2}+\eps.
\end{matrix}\right.
$$ 
\end{itemize}
Let $A_{N_0}=\{N\le N_0\}$.

Assume that for some $n\ge N_0$, $\hat Z_n\le Z_n\le \tilde Z_{n}$ (this is definitely true for $n=N_0$). We observe that on $A_{N_0}$: 
\begin{itemize}
\item 
when $\tilde Z_n$ chooses left, $Z_n$ also chooses left since $U_{n+1}\le \frac12-\eps<\frac{L_n}{L_n+R_n}$. In this case, $Z_{n+1}=\xi_{n+1}Z_n\le \xi_{n+1}\tilde Z_n = \tilde Z_{n+1}$. When $\tilde Z_n$ chooses right, $Z_n$ might choose left or right, but we still have $Z_{n+1}\le Z_n+\xi_{n+1}(1-Z_n)\le \tilde Z_n+\xi_{n+1}(1-\tilde Z_n) = \tilde Z_{n+1}$; 
\item 
when $Z_n$ chooses left, $\hat Z_n$ also chooses left since $U_{n+1}\le \frac{L_n}{L_n+R_n} < \frac12+\eps$. In this case, $Z_{n+1}=\xi_{n+1}Z_n\ge \xi_{n+1}\hat Z_n = \hat Z_{n+1}$. When $Z_n$ chooses right, $\hat Z_n$ might choose left or right, but we still have $\hat Z_{n+1}\le \hat Z_n+\xi_{n+1}(1-\hat Z_n)\le Z_n+\xi_{n+1}(1- Z_n) = Z_{n+1}$. 
\end{itemize}
By induction, we obtain that on $A_{N_0}$ for all $n\ge 0$, $\hat Z_{n}\le Z_n\le \tilde Z_{n}$. Therefore, we have
$$
\P(\tilde Z_n\le x,\ A_{N_0})\le \P(Z_n\le x,\ A_{N_0}) \le \P(\hat Z_n\le x,\ A_{N_0})
$$ 
for all $n\ge0$ and $x\in [0,1]$. On the other hand, by Theorem~\ref{linearmodel}, $\tilde Z_{n}$ and $\hat Z_{n}$ converge weakly to $\text{Beta}\left(\frac{1}{2}+\eps,\frac{1}{2}-\eps\right)$ and $\text{Beta}\left(\frac{1}{2}-\eps,\frac{1}{2}+\eps\right)$ respectively, as $n\to\infty$. Since $\eps$ is arbitrarily small and $\P(A_{N_0})\to 1$ as $N_0\to\infty$, the theorem is proved.
\end{proof}

\section*{Appendix}
\noindent 
\textbf{Proof of Lemma~\ref{transform}}\\
(a) For $u=(u_1,...,u_d)\in K$ and $z=(z_1,...,z_d)\in V_0$ define
$$
v=(v_1,v_2,\dots, v_d):= G_z^{-1}(u).
$$ 
Note that $u_0\le 1-u_d\le 1-\delta \le z_0$, $z_j\le 1-z_0\le \delta$, thus 
$$
s(1-t)^{d-1} -\delta\le u_j - z_j \le v_j=u_j-\frac{u_0}{z_0}z_j\le u_j\le t
$$ 
for $j=1,2,...,d$.Therefore, for $j=1,2,\dots,d$, we have
$$
s(1-t)^{d-1}-\delta \le v_j\le \frac{v_j}{\displaystyle 1-\sum_{l=j+1}^d v_l}=\frac{\displaystyle u_j-z_j\frac{u_0}{z_0}}{\displaystyle 1-\sum_{l=j+1}^d u_l+\sum_{l=j+1}^d z_l\frac{u_0}{z_0}} \le \frac{u_j}{\displaystyle 1-\sum_{l=j+1}^d u_l}\le t.
$$
It implies that $v=G_z^{-1}(u)\in K_0$ for each $u\in K$ and $z\in V_0$, where we denote 
\begin{align}\label{K0}
K_0=\left\{ (v_1,\dots,v_d)\in\mathcal{S}_d: s(1-t)^{d-1}-\delta\le\frac{v_j}{1-\sum_{l=j+1}^d v_l}\le t, j=1,2,...,d \right\}.
\end{align}
Observe that $T^{-1}(K_0)=[s(1-t)^{d-1}-\delta,t]^d$. Thus,
$T^{-1}\circ G^{-1}_z(K) \subset [s(1-t)^{d-1}-\delta,t]^d$.\\

\vspace{2mm}
\noindent
(b) For $u\in K, z\in V_k$, let
\begin{align*}v&=(v_1,v_2,\dots,...,v_k):=G_{R_k(z)}^{-1}(R_k(u))\\
&=\left(u_{0}-z_{0}\frac{u_k}{z_k}, u_1-z_1\frac{u_k}{z_k},\dots,u_{k-1}-z_{k-1}\frac{u_k}{z_k},u_{k+1}-z_{k+1}\frac{u_k}{z_k},\dots, u_{d}-z_{d}\frac{u_k}{z_k}\right).
\end{align*}
Note that $z_l\le 1-z_k \le \delta$ for $l\in\{0,2,...,d\}\setminus \{k\}$ and $u_{k} \le {\max\{u_d,1-u_d\}} \le {1-\delta}\le {z_k}.$
Therefore, we observe that\\

(i) for $k+1\le j\le d$,
$$\frac{v_j}{\displaystyle 1-\sum_{l=j+1}^d v_l}=\frac{\displaystyle u_j-z_j\frac{u_k}{z_k}}{\displaystyle 1-\sum_{l=j+1}^d u_l+\sum_{l=j+1}^d z_l\frac{u_k}{z_k}} \le \frac{u_j}{\displaystyle 1-\sum_{l=j+1}^d u_l}\le t$$
and 
\begin{align*}
\frac{v_j}{\displaystyle 1-\sum_{l=j+1}^d v_l}
& \ge v_j = u_j-z_j\frac{u_k}{z_k}\ge u_j-z_j\ge s(1-t)^{d-1}-\delta.
\end{align*}

(ii) for $j=1$, we have
\begin{align*}
\frac{v_{1}}{\displaystyle 1-\sum_{l=2}^{d} v_l }& =\frac{\displaystyle u_0-z_0\frac{u_k}{z_k}}{\displaystyle u_0+\sum_{l=1}^d z_l\frac{u_k}{z_k}}=1-\frac{{u_k}}{\displaystyle \left(1-\sum_{l=1}^d u_l\right)z_k+{u_k}\sum_{l=1}^d z_l}\\
& \le 1-\frac{u_k} {\displaystyle \left(1-\sum_{l=k}^d u_l\right)z_k+{u_k}}
 \le 1-\frac{u_k} {\displaystyle 1-\sum_{l=k+1}^d u_l }\le 1-s
\end{align*}
and
\begin{align*}
& \frac{v_{1}}{\displaystyle 1-\sum_{l=2}^{d} v_l } \ge v_1= \displaystyle u_0-z_0\frac{u_k}{z_k}\ge (1-t)^d-\delta.
\end{align*}
(iii) for $2\le j\le k$,
$$
s(1-t)^{d-1}-\delta \le v_j \le \frac{v_j}{\displaystyle 1-\sum_{l=j+1}^{d} v_l }=\frac{\displaystyle u_{j-1}-z_{j-1}\frac{u_k}{z_k}}{\displaystyle 1-\sum_{l=j}^d u_l+\sum_{l=j}^d z_l\frac{u_k}{z_k}} \le \frac{u_{j-1}}{\displaystyle 1-\sum_{l=j}^d u_l}\le t. 
$$ 

Therefore,
$$v\in T\left([(1-t)^{d}-\delta,1-s]\times[s(1-t)^{d-1}-\delta,t]^{d-1}\right). $$

\section*{Acknowledgement}
We would like to thank the anonymous referees for very careful reading of our manuscript and their useful comments, which substantially improved the paper. We also would like to thank Andre R.~Wade for useful suggestions. SV research is partially supported by the grants from Swedish Research Council (VR2014-5147) and the Crafoord Foundation.

\end{document}